\definecolor{atomictangerine}{rgb}{1.0, 0.6, 0.4}
\newtheorem{theorem}{Theorem}
\newtheorem*{mlem}{Four-Point Lemma}
\newtheorem*{m3lem}{Three-Point Lemma}
\newtheorem{lemma}{Lemma}[section]
\newtheorem{corollary}{Corollary}
\newtheorem{conj}{Conjecture}
\newtheorem{cor}[lemma]{Corollary}
\newtheorem*{corollary*}{Corollary}
\newtheorem{proposition}{Proposition}
\theoremstyle{definition}
\newtheorem{remark}[lemma]{Remark}
\newtheorem{example}{Example}
\newtheorem{remark-intro}{Remark}
\newcommand\NN{{\mathbb N}}
\newcommand\QQ{{\mathbb Q}}
\newcommand\ZZ{{\mathbb Z}}
\newcommand\p{{\mathbb P}}
\newcommand\Proj{{\mathbb P}}
\newcommand\PGL{{\mathrm{PGL}}}
\newcommand\tr{\hbox to 1mm  {${}^t \!  $} }
\newcommand{\nc}{\newcommand}
\nc{\apl}{K\cup \{\infty\}}
\nc{\mfp}{\mathfrak{p}}
\nc{\dmfp}{\delta_\mfp}
\nc{\vmfp}{v_\mfp}
\nc{\ace}{\`e }
\nc{\aca}{\`a }
\nc{\aci}{\`i }
\nc{\aco}{\`o }
\nc{\acu}{\`u }
\nc{\pid}{\mathfrak{p} }
\nc{\bdm}{\begin{displaymath}}
\nc{\edm}{\end{displaymath}}
\nc{\beq}{\begin{equation}}
\nc{\eeq}{\end{equation}}
\nc{\dpid}{\delta_{\mathfrak{p}}}
\nc{\vvs}{\textquotedblleft}
\nc{\vvd}{\textquotedblright}
\nc{\os}{\mathcal{O_S}}
\nc{\mcu}{\mathcal{U}}
\nc{\rs}{\sqrt{R_S}}
\nc{\rsu}{\sqrt{R_S^*}}
\nc{\bpm}{\begin{pmatrix}}
\nc{\epm}{\end{pmatrix}}
\nc{\srs}{\sqrt{R_S}}
\nc{\un}{\{1,2,\ldots,n\}}
\DeclareMathOperator{\lcm}{lcm}
\numberwithin{equation}{section}
\title[Scarcity of cycles]{Scarcity of cycles for rational functions over a number field}
\author{Jung Kyu Canci}
\address{Jung Kyu Canci, Universit\"{a}t Basel, Mathematisches Institut, Spiegelgasse $1$, CH-$4051$ Basel}
\email{jungkyu.canci@unibas.ch}
\author{Solomon Vishkautsan}
\address{Solomon Vishkautsan, Scuola Normale Superiore,
Piazza dei Cavalieri 7, 56126 Pisa,
Italy}
\email{wishcow@gmail.com \textrm{or} solomon.vishkautsan@sns.it}
\begin{document}

\begin{abstract}
  We provide an explicit bound on the number of periodic points of a
  rational function defined over a number field, 
  where the bound depends only on the number of
  primes of bad reduction and the degree of the function, and is
  linear in the degree. 
More generally, we show that there exists an explicit uniform bound on the number of
periodic points for any rational function in a given finitely
generated semigroup (under composition) of rational functions of
degree at least 2. We show that under stronger assumptions the
dependence on the degree of the map in the bounds can be removed.
\end{abstract}

\maketitle

\section{Introduction}

In this article we prove the following theorem.

\begin{theorem}\label{MainT}
  Let $K$ be a number field and $S$ a finite set of places of $K$
  containing all the archimedean ones. Let
  $\phi, \psi\colon\Proj_1\to\Proj_1$ be rational maps defined over $K$, where the degree $d_\phi$ of $\phi$ is $\geq 2$. Assume that 
  both maps have good reduction outside $S$.  Then the number of
  $K$-rational periodic points of the composition $\psi\circ\phi$ is bounded by $\kappa d_\phi+\lambda$, for some positive
  integers $\kappa$ and $\lambda$ depending only on the cardinality of
  $S$, and the bound can be effectively computed.
\end{theorem}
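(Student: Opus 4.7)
The plan is to partition the $K$-rational periodic points of $f := \psi \circ \phi$ according to exact minimal period under $f$ and to bound each range with a different technique. Since $f$ has degree $d_\phi d_\psi$, the trivial bound on $K$-rational fixed points alone is already $d_\phi d_\psi + 1$, which is quadratic rather than linear in $d_\phi$; the factorisation $f = \psi \circ \phi$ must therefore be exploited substantively, not merely viewed as a single map.

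For cycles of period $n \ge 4$, I would invoke a four-point cross-ratio argument under good reduction: four distinct points on a common orbit produce $S$-unit-type relations whose solutions are bounded in number and in length by a function of $|S|$ alone, absorbing this contribution into $\lambda$. For period-$3$ cycles, the analogous three-point argument on triples of orbit points gives an $|S|$-only bound. For period-$2$ cycles $\{P_1, P_2\}$, I would pass to the augmented four-tuple $(P_1, \phi(P_1), P_2, \phi(P_2))$: modulo a prime of good reduction this behaves like an orbit of length four for the alternating $(\phi, \psi)$-system, to which the four-point $S$-unit machinery again applies, giving a further constant contribution.

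The delicate case is fixed points. Setting $g := \phi \circ \psi$, the assignment $P \mapsto \phi(P)$ puts $K$-rational fixed points of $f$ in bijection with those of $g$. Geometrically, these are $K$-rational intersection points of the graph $\Gamma_\phi \subset \P^1 \times \P^1$ (of bidegree $(1, d_\phi)$) with the reversed graph $\Gamma_\psi^{\mathrm{rev}} = \{(\psi(w), w)\}$ (of bidegree $(d_\psi, 1)$), totalling $d_\phi d_\psi + 1$ over $\bar K$. To reach the sharper $\kappa d_\phi + \lambda$, I would exploit the factorisation: organise the fixed points of $g$ by their $\phi$-fibres (each of size $\le d_\phi$ over $\bar K$) and bound the number of ``active'' fibres by a function of $|S|$ alone via an $S$-unit equation coming from the good reduction of $\phi$. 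The principal obstacle is precisely this refinement: any direct application of a degree-linear bound to $g$ (Morton--Silverman style) yields only $\kappa d_\phi d_\psi + \lambda$, which is too weak. The crux is to propagate the $S$-integrality afforded by the good reduction of $\phi$ and $\psi$ through the two-step composition in such a way that only one factor's degree, $d_\phi$, enters the final estimate.
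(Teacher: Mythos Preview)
Your approach has a genuine gap, and it stems from partitioning by period in the first place. The paper does not do this.

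The key observation you are missing is that the distance-preservation relation $\delta_{\mathfrak{p}}(P,Q) = \delta_{\mathfrak{p}}(\phi(P),\phi(Q))$ holds for \emph{any} pair of $K$-rational periodic points $P,Q$ of $\psi\circ\phi$ (at every $\mathfrak{p}\notin S$), not merely for points lying in the same orbit. This follows from good reduction: $\delta_{\mathfrak{p}}(P,Q) \le \delta_{\mathfrak{p}}(\phi(P),\phi(Q)) \le \delta_{\mathfrak{p}}(\psi(\phi(P)),\psi(\phi(Q))) = \delta_{\mathfrak{p}}(P,Q)$, the last equality by iterating $\psi\circ\phi$ to a common multiple of the two periods. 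With this in hand, the paper simply fixes \emph{any} four distinct periodic points $A,C,E,G$ (their $\phi$-images are automatically distinct), and observes that \emph{every} periodic point $P$ lies in the set $\mathcal{P}$ of points satisfying the four conditions $\delta_{\mathfrak{p}}(A,P)=\delta_{\mathfrak{p}}(\phi(A),\phi(P))$, etc. A single ``Four-Point Lemma'' then bounds $|\mathcal{P}|$ linearly in $d_\phi$: the four conditions become a $4\times 4$ linear system in the coordinates of $P$ and $\phi(P)$ with three free $S$-unit parameters $u_1,u_2,u_3$, whose vanishing determinant is an $S$-unit equation; the only place $d_\phi$ enters is through the algebraic constraint $\phi([x_1:y_1])=[x_2:y_2]$, contributing at most $d_\phi+1$ (or $2d_\phi+2$) solutions per unit configuration in certain degenerate subcases.

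Your per-period plan, by contrast, never gets off the ground. For period $\ge 4$, the four-point orbit argument you sketch bounds cycle \emph{length} (this is essentially Morton--Silverman), not the number of cycles; nothing in your outline prevents arbitrarily many disjoint $4$-cycles. For period $2$ your augmented tuple $(P_1,\phi(P_1),P_2,\phi(P_2))$ is closer in spirit but is again a per-cycle constraint. And for fixed points you explicitly concede the obstacle: organising fixed points of $g=\phi\circ\psi$ by $\phi$-fibre and ``bounding the number of active fibres by a function of $|S|$ alone'' is precisely the content that needs proof, and you have not supplied the $S$-unit relation that would do it. The paper's insight is that the factorisation $f=\psi\circ\phi$ yields $\delta_{\mathfrak{p}}(P,Q)=\delta_{\mathfrak{p}}(\phi(P),\phi(Q))$ \emph{globally} across all pairs of periodic points, which makes the period of $P$ irrelevant and collapses your ``delicate'' fixed-point case into the same analysis as every other.
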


We remark that the constants $\kappa$ and $\lambda$ in the theorem depend only on the cardinality of $S$ and thus implicitly on the degree $[K:\QQ]$ but not on the field $K$ itself.

We say that $P\in
\p_1$ is \emph{periodic} for a rational map $\phi\colon\p_1\to\p_1,$
defined over some base field $K$, if there exists a positive integer $n$ such that $\phi^n(P)=P$ (where $\phi^n$ is the $n$-th iteration by composition of the map $\phi$). The minimal such $n$ is called the \emph{period} of $P$.
For a point $P\in\p_1(K)$, the set $O_{\phi}(P)=\{\phi^i(P)\mid{i}\in\NN, i\geq{0}\}$ is called the (forward) \emph{orbit} of $P$ with
respect to $\phi$. We say that a point $P$ is \emph{preperiodic} for $\phi$ when its orbit contains a periodic point,
i.e., when the orbit of $P$ is finite.

%%% The minimal such $n$ satisfying the latter two properties is called the \emph{period} of $P$ for $\phi$ (the period is often called \emph{minimal}, \emph{exact} or \emph{primitive} in the literature).
 
%%% For a periodic point $P$ of period $n$ the $n$-tuple $(P,\phi(P),\ldots,\phi^{n-1}(P))$ is called a \emph{cycle} and $n$ the \emph{length} of the cycle. 

%%% A point is called \emph{strictly periodic} if it is preperiodic but not periodic.

A rational map $\phi\colon\mathbb{P}_1\to\mathbb{P}_1$ defined over a number field $K$ is said to have \emph{good reduction} at a prime $\pid$ of $K$ if $\phi$ can be written as $\phi=[F(X,Y):G(X,Y)]$ where $F,G\in{R_\pid[X,Y]}$ are homogeneous polynomials of degree $d$, such that the resultant of $F$ and $G$ is a $\pid$-unit, where $R_\pid$ is the localization of the ring of integers of $K$ at $\pid$. For a fixed finite set $S$ of places of $K$ containing all the archimedean ones, we say that $\phi$ has good reduction outside of $S$ if it has good reduction at each place $\pid\notin S$. 

%%%We denote by Per$(\phi,K)$ the set of all periodic points of a rational map $\phi:\p_1\to\p_1$ in $\p_1(K)$ and by PrePer$(\phi,K)$ the set of all preperiodic points of $\phi$ in $\p_1(K)$. 
The set of preperiodic points in $\p_1(\bar{K})$ of a rational
map $\phi:\p_1\to\p_1$ of degree $d\geq{2}$ defined over a number field $K$, where $\bar{K}$
is the algebraic closure of $K$, is of bounded height
(this is a special case of Northcott's theorem~\cite{NO}).
Since a number field $K$ possesses the Northcott property (i.e., that
every set of bounded height is finite, cf.~\cite[\S{4.5}]{BG}), the set
of $K$-rational preperiodic points of $\phi$ is finite, so in particular the set of $K$-rational periodic points of $\phi$ is finite too. 
The problem is to find a bound on the number of preperiodic points that depends in a \vvs minimal\vvd\ way on the map $\phi$. One of the main motivations for our research is the following conjecture of Morton and Silverman~\cite{MS1}.

\begin{conj}[Uniform Boundedness Conjecture] Let $\phi:\p_N\to\p_N$ be a morphism of degree $d\geq{2}$ defined over a number field $K$. Then 
the number of $K$-rational preperiodic points of $\phi$ is bounded by a bound depending only on $N$ and the degrees of $K/\mathbb{Q}$ and $\phi$.
\end{conj}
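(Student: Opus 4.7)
The Uniform Boundedness Conjecture is one of the central open problems in arithmetic dynamics, and a complete proof is not within reach of current methods; what I can offer is a strategic outline. The plan is to combine a moduli-theoretic reduction with uniform height estimates. First, I would replace $\phi$ by a $\PGL_{N+1}(\bar K)$-conjugate whose coordinates have favorable arithmetic: using minimal-model theory on the moduli space $\M_d^N$ of degree-$d$ endomorphisms of $\p_N$ (Silverman, Bruin--Molnar for $N=1$, Petsche--Stout--Szpiro in higher dimension), every $\phi$ defined over $K$ admits a $K$-conjugate with good reduction outside a finite set $S_\phi$ of places. Theorem~\ref{MainT} above, together with its higher-dimensional analogues (Morton--Silverman, Hutz), would then produce a bound on the $K$-rational periodic points that is linear in $d$ and depends on $|S_\phi|$. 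The essential difficulty is that $|S_\phi|$ cannot in general be controlled by $d$, $N$, and $[K:\QQ]$ alone, so this reduction alone does not yield the conjecture.

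To sever the dependence on $\phi$, the natural route is through the canonical height $\hat h_\phi$, which vanishes precisely on the preperiodic points. Silverman's conjectural Lehmer-type lower bound
\[
\hat h_\phi(P) \;\geq\; \frac{c(d,N,[K:\QQ])}{[K(P):K]^{1+\epsilon}}
\]
for non-preperiodic $P\in\p_N(\bar K)$ would imply the Morton--Silverman conjecture via a Northcott counting argument at bounded height. I would attempt to establish such a bound by arithmetic equidistribution of points of small canonical height (Baker--Rumely, Yuan, Chambert-Loir) combined with $p$-adic dynamical input in the spirit of Benedetto and Favre--Rivera-Letelier. Recent results of Looper, DeMarco, and Krieger execute precisely this program for restricted families such as $z^d+c$ over $\QQ$ and supply a template to generalize. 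A complementary route is via modular-curve methods in the style of Morton and Flynn--Poonen--Schaefer, which, for each fixed period $n$, produce the dynatomic curve parametrizing maps with a point of exact period $n$ and reduce the question to bounding rational points on it.

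The main obstacle is that the simplest non-trivial case --- quadratic polynomials over $\QQ$ --- is itself open: Poonen conjectured $9$ as the sharp bound, and although Flynn--Poonen--Schaefer and Stoll have eliminated rational periodic points of periods $4$ and $5$, period $6$ remains unruled-out. Consequently, any serious assault on the conjecture must either (i) make the equidistribution and $p$-adic machinery genuinely uniform in the residue characteristic, for which the $S$-adic framework developed in this paper is plausibly well-suited, or (ii) establish a version of the Lang--Vojta height conjectures strong enough to feed into the canonical-height argument. I expect residue-characteristic uniformity in (i) to be the binding constraint, since the existing $p$-adic bounds on multiplicities of periodic points degrade as $p\to\infty$ and closing this dependence appears to require new input beyond what the present techniques supply.
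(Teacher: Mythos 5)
You have correctly recognized that this statement is the Morton--Silverman Uniform Boundedness Conjecture, which the paper states only as motivation and does not prove; there is no proof of it in the paper (or anywhere in the literature) against which your proposal could be compared. Accordingly, your submission is a research-program sketch rather than a proof, and it should be judged as such: as a proof it contains genuine gaps, because every load-bearing step rests on statements that are themselves open. Concretely, the reduction via minimal models leaves the set $S_\phi$ of bad places uncontrolled in terms of $d$, $N$ and $[K:\QQ]$ (as you yourself note), so Theorem~\ref{MainT} and its relatives cannot be fed into the argument uniformly; the Lehmer-type lower bound for the canonical height $\hat h_\phi$ is a conjecture of Silverman, not a theorem; and the equidistribution and dynatomic-curve routes are known to yield uniformity only in very restricted families (e.g.\ $z^2+c$ over $\QQ$ for small periods). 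None of these steps can currently be closed, so the proposal does not establish the statement --- which is the expected outcome, since the statement is open.

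Your diagnosis of where the difficulty sits is essentially consistent with the paper's own framing: the bounds proved here depend on $|S|$ (equivalently on the primes of bad reduction) and are linear in the degree, and severing the dependence on $|S|$, or on the map beyond $d$ and $[K:\QQ]$, is precisely what the conjecture demands and what the present techniques do not achieve. If you wish to connect your sketch more tightly to this paper, the honest statement is that the $S$-unit-equation methods here give uniform bounds within any family of maps with bad reduction confined to a fixed finite set $S$ (or within a finitely generated semigroup, as in Corollary~\ref{CorFG}), and that the conjecture would follow if one could always conjugate into such a family with $|S|$ bounded in terms of $d$, $N$, $[K:\QQ]$ alone --- which is false as stated and is the crux of the problem.
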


The statement of Theorem~\ref{MainT} is trivially true when $\psi$ is a constant map. If in Theorem~\ref{MainT} we take $\psi$ to be the identity map, then we immediately obtain the following corollary.

\begin{corollary}
  Let $K, S, \phi$ and $d_\phi$ be as in Theorem~\ref{MainT}. Then the
  number of $K$-rational periodic points of $\phi$ is bounded linearly in $d_\phi$, with coefficients that depend only on the cardinality of the set $S$.
\end{corollary}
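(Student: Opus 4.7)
My plan is to deduce the corollary directly from Theorem~\ref{MainT} by specializing $\psi$ to the identity map on $\p_1$. The only step is to verify that $\id\colon\p_1\to\p_1$ satisfies the hypotheses the theorem imposes on $\psi$, after which the conclusion is immediate since $\id\circ\phi=\phi$.

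The verification is straightforward. The identity is defined over $K$ and admits the homogeneous presentation $\id=[X:Y]$ with defining forms $F(X,Y)=X$ and $G(X,Y)=Y$. The resultant $\Res(F,G)$ of these two linear forms equals $1$, which is a $\pid$-unit in the localization $R_\pid$ for every prime $\pid$ of $K$. Hence $\id$ has good reduction at every non-archimedean prime of $K$, and in particular outside $S$. Note that Theorem~\ref{MainT} imposes the condition $d_\phi\geq 2$ only on $\phi$ and places no lower bound on the degree of $\psi$, so the equality $d_{\id}=1$ is not an obstruction; moreover, $\id$ carries no information beyond $K$ and $S$, so the constants produced by the theorem will not acquire any dependence on $\psi$.

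With the hypotheses verified, Theorem~\ref{MainT} yields positive integers $\kappa$ and $\lambda$, depending only on $|S|$ (and effectively computable), such that the set of $K$-rational periodic points of $\id\circ\phi=\phi$ has cardinality at most $\kappa\,d_\phi+\lambda$. This is precisely the assertion of the corollary. There is no genuine obstacle here: the statement is a clean specialization of the main theorem, and I include it because the linear-in-$d_\phi$ bound on $\#\Per(\phi,K)$ for a single map of good reduction outside $S$ is arguably the most recognizable consequence of the more general composition statement.
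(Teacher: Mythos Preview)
Your proof is correct and follows exactly the paper's own approach: the paper simply remarks that taking $\psi$ to be the identity map in Theorem~\ref{MainT} immediately yields the corollary. Your additional verification that $\id$ has good reduction everywhere (via $\Res(X,Y)=1$) is a welcome detail the paper leaves implicit.
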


The techniques used in the proof of Theorem~\ref{MainT} can be extended
to prove uniform boundedness on the number of periodic points in a large class of finitely generated semigroups of rational maps.

\begin{corollary}\label{CorFG}
Let $K$ be a number field and let $I$ be a finitely generated
semigroup, with respect to composition, of rational maps $\p_1\to\p_1$
of degree $\geq 2$ defined over $K$. Then there exists a uniform bound
$B$, depending on $I$, such that any $\phi\in I$ has at most $B$ 
periodic points in $\p_1(K)$.
\end{corollary}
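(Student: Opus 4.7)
The plan is to reduce Corollary~\ref{CorFG} to a single invocation of Theorem~\ref{MainT}, exploiting the crucial feature that the bound $\kappa d_\phi+\lambda$ there scales with the degree of the \emph{inner} factor alone. First I would fix a finite set of generators $\phi_1,\ldots,\phi_r$ of $I$, each of degree $d_i\geq 2$, and set $D=\max_i d_i$. For each $i$ there is a finite set $S_i$ of places of $K$ outside which $\phi_i$ has good reduction, and I would take $S$ to be the union of the $S_i$'s together with the archimedean places of $K$; this is still a finite set.

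Next I would record the standard fact that good reduction at a prime $\mathfrak{p}$ is preserved under composition of rational maps (a quick consequence of the multiplicativity of the resultant under composition). Hence every element of $I$, being a word in the $\phi_i$'s, has good reduction outside $S$. Let $\kappa$ and $\lambda$ be the integers from Theorem~\ref{MainT} associated with this $S$; by the theorem they depend only on $|S|$ and are therefore determined by $I$.

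Now for any $\phi\in I$ I would write $\phi=\phi_{j_n}\circ\cdots\circ\phi_{j_1}$ for some $j_1,\ldots,j_n\in\{1,\ldots,r\}$ and factor off the innermost generator: $\phi=\psi\circ\phi_{j_1}$, where $\psi:=\phi_{j_n}\circ\cdots\circ\phi_{j_2}$ if $n\geq 2$ and $\psi:=\id$ if $n=1$. Both $\psi$ and $\phi_{j_1}$ have good reduction outside $S$, and $d_{\phi_{j_1}}\geq 2$, so Theorem~\ref{MainT} applies and yields
\[
\#\bigl(\Per(\phi)\cap\p_1(K)\bigr)\ \leq\ \kappa\, d_{\phi_{j_1}}+\lambda\ \leq\ \kappa D+\lambda.
\]
Since the right-hand side depends on $I$ but not on the particular $\phi$, the constant $B:=\kappa D+\lambda$ is the required uniform bound.

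The argument is almost purely formal once Theorem~\ref{MainT} is in hand, so I do not anticipate a serious technical obstacle; the only ingredient beyond the theorem itself is the compatibility of good reduction with composition, which is routine in this setting. What really makes the argument succeed is a feature of the theorem and not of the corollary: the bound there is linear in the \emph{inner} degree only. Had Theorem~\ref{MainT} instead produced a bound growing with the total degree $d_\psi d_\phi$, no uniform bound over $I$ could be derived this way, since words in the generators have arbitrarily large total degree.
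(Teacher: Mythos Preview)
Your argument is correct and matches the paper's own proof essentially line for line: fix a finite generating set, enlarge $S$ to include all primes of bad reduction of the generators (so every word has good reduction outside $S$), factor any $\phi\in I$ as $\psi\circ\phi_{j_1}$ with $\psi$ either the identity or another word in $I$, and apply Theorem~\ref{MainT} with the inner map $\phi_{j_1}$ to get the uniform bound $\kappa D+\lambda$. Your closing remark that the linear dependence on the \emph{inner} degree is exactly what makes the corollary work is also the point the paper is making.
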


The bound $B$ in Corollary \ref{CorFG} is effectively computable. Let $\mathcal{G}$ be any given finite set of generators of $I$, and let $S$ be a finite set of places of $K$
  containing all the archimedean ones, such that all the elements in $\mathcal{G}$ have good reduction outside of $\mathcal{S}$. Our proof of the corollary provides a bound $B$ depending only on the cardinality of $\mathcal{S}$ and the maximal degree of the elements of $\mathcal{G}$.

We would like to remark that while this work shares some of the techniques used in \cite{C0} and \cite{C}, Corollary \ref{CorFG} does not follow from any of the results therein, and requires the new methods developed in this article.

% The bound $B$ in Corollary \ref{CorFG} is effectively computable by a set of generators of the semigroup $I$. For a given set $\mathcal{G}$ of generators of $I$, our proof provide a number $B$ depending only on the sets of bad reduction  and on the biggest degree of the elements of $\mathcal{G}$. Note that the number $B$ gives an upperbound of the set of all $K$--rational periodic points for a map in $I$. Our Corollary \ref{CorFG} is far to follow directely from \cite[Corollary 1]{C0} or \cite[Corollary 1]{C}. In particular \cite[Corollary 1]{C0} does not give any information about the number of periodic points for a rational function. But maybe it could be an useful tool to improve our bounds. A direct application of \cite[Corollary 1]{C} can not give a uniform bound of the shape as the one of our Corollary \ref{CorFG}. 

We also extract the following special case from the proof of Theorem~\ref{MainT}.

\begin{corollary}\label{cor:everywhere-good-reduction}
Let
  $\phi\colon\Proj_1\to\Proj_1$ be a rational map of degree $d\geq 2$
  defined over $\QQ$ with everywhere good reduction 
  (i.e., the map has good reduction at every non-archimedean place of $K$),
then the number of $\QQ$-rational periodic points 
  of $\phi$ is bounded by $d+5$.
\end{corollary}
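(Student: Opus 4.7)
The plan is to specialize Theorem~\ref{MainT} to the choices $K=\QQ$, $\psi=\id$, and $S=\{\infty\}$ (the unique archimedean place of $\QQ$), and then to read off the explicit constants $\kappa$ and $\lambda$ that the proof yields in this extremal case $|S|=1$. The hypothesis that $\phi$ has everywhere good reduction is exactly the statement that the good-reduction set for $\phi$ contains every non-archimedean prime, so this specialization is admissible.

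I would begin by splitting the set of $\QQ$-rational periodic points of $\phi$ into (a) the fixed points and (b) the points of minimal period at least $2$. Since $\phi$ has degree $d \geq 2$, the fixed points are the zeros on $\p_1$ of a nonzero polynomial of degree $d+1$, so there are at most $d+1$ of them. This accounts for both the coefficient $\kappa=1$ and one unit of $\lambda$. It then remains to prove that the $\QQ$-rational periodic points of period $\geq 2$ number at most $4$.

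For part~(b) I would invoke the Four-Point Lemma and Three-Point Lemma that the introduction singles out as the key inputs to the proof of Theorem~\ref{MainT}. These lemmas produce obstructions to the existence of too many distinct periodic points along a single orbit, by reducing the existence of such a configuration to a system of $S$-unit conditions. When specialized to $|S|=1$ these auxiliary equations become very restrictive (the $S$-unit group of $\QQ$ is just $\{\pm 1\}$), so the obstruction becomes sharp. A case analysis by minimal period, applying the Three-Point Lemma to $2$- and $3$-cycles and the Four-Point Lemma to rule out longer cycles, should then leave room for at most $4$ additional periodic points over $\QQ$.

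The main obstacle is to carry out this case analysis tightly enough to produce exactly the constant $4$, rather than some slightly larger number. In particular, one must account for how many $2$-cycles and $3$-cycles can coexist, bound the total number of points they contribute, and verify that cycles of length $\geq 4$ are genuinely forbidden under $|S|=1$. Once this is done, the total bound reads $d+1+4=d+5$, as claimed.
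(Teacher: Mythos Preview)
Your proposed decomposition into fixed points and points of period $\geq 2$ does not work: the claim that there are at most $4$ rational periodic points of period $\geq 2$ is false. Baron's example \eqref{dperiod-two} exhibits, for every $d\geq 2$, a monic polynomial of degree $2d$ over $\ZZ$ with everywhere good reduction and exactly $2d$ rational points of minimal period $2$. So the number of period-$2$ points alone can grow linearly in the degree, and no amount of case analysis on cycle lengths can force it down to a constant. Relatedly, the Three-Point Lemma cannot be invoked here without an auxiliary hypothesis on ramified periodic points or extra $K$-rational preimages (condition \eqref{inq3pl}), which is not available for a general $\phi$.

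The paper's argument is quite different and does not separate by period at all. It goes back into the proof of the Four-Point Lemma and re-runs it with the knowledge that the $S$-unit group is $\{\pm 1\}$. One first observes that for $a,b\in\CC^*$ the equation $ax+by=1$ has at most one solution with $x,y\in\{\pm 1\}$, so one may take $B(|S|)=1$. In Case~1 of the Four-Point Lemma only a single subcase is needed, contributing at most $d+1$ points. In Case~2, since $(u_1,u_2)\in\{\pm 1\}^2$ takes at most four values and each value determines the solution uniquely, this case contributes at most $4$ further points. Summing gives $d+5$. The $d+1$ here is \emph{not} the fixed-point count; it comes from the degree bound in \eqref{pxy1} inside the Four-Point Lemma.
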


Let $\phi$ be a monic polynomial of degree at least $2$ with coefficients in $\ZZ$. Note that $\phi$ has
good reduction at every prime. 
This imposes strong conditions on the $\QQ$-rational periodic points of $\phi$. For instance $\phi$ can have
$\QQ$-rational periodic points of period at most $2$ (this fact can be proved with elementary arguments, e.g.\ \cite[Theorem~12.9]{NA2}).
% Using this fact
% one obtains a bound of $d^2+1$ 
% for the number of $\QQ$-rational periodic
% points of such maps (including the fixed point at infinity), where $d$ is the degree of the polynomial. 
% Corollary~\ref{cor:everywhere-good-reduction} of Theorem~\ref{MainT} provides
% an improvement on this bound and it holds for any rational function with good reduction at all primes (and not just for polynomials);
% however, 
During the preparation of this article, the authors were informed by W.\ Narkiewicz of an unpublished proof of G.\ Baron from 1990,
showing that the number of $\QQ$-rational periodic points of monic polynomials of degree $\geq{2}$ with integer coefficients is at most $d+1$, including the fixed point at infinity, which is better (for the special case of polynomials) than the bound we obtained in Corollary~\ref{cor:everywhere-good-reduction}. For the reader's convenience, we reproduce Baron's result in Section~\ref{sec:baron}. 
Baron's bound is sharp; for example, for any positive integer $d\geq{2}$ the polynomial
\begin{equation}\label{dfixed}
\phi(x)=(x-1)(x-2)\cdots(x-d) + x
\end{equation}
has $d+1$ (fixed) periodic points. 

The following example of Baron shows that the number of periodic points of period $2$ must also depend on the degree of the map. Let $n_1,\ldots,n_d$, with $d\geq 2$, be distinct positive integers. Then the monic polynomial
\begin{equation}\label{dperiod-two}
\phi(x)=\prod_{i=1}^d (x^2-n_i^2)-x
\end{equation}
is of degree $2d$ with everywhere good reduction, and has $2d$ periodic points of period $2$, namely $\pm n_1,\pm n_2,\ldots,\pm n_d$.  

Recall that a point $[a:b]\in\p_1(K)$ is \emph{ramified} (or \emph{critical}) for a rational function $\phi\colon \p_1\to\p_1$ if the order of zero at $[a:b]$ of the algebraic condition $\phi([x:y])=\phi([a:b])$ is greater than $1$. Under this terminology, Baron's special case of monic polynomials can be identified as rational maps with everywhere good reduction having a totally ramified fixed point.
Motivated by this idea of imposing ramified periodic points to obtain smaller bounds, one can in fact eliminate the dependence on the degree in Theorem~\ref{MainT} by making stronger assumptions. 

\begin{theorem}\label{thm:main2}
Let $K, S, \phi$ be as in Theorem~\ref{MainT}. Suppose that there exists a set $\mathcal{A}\subset\p_1(K)$ of periodic points with the property 
\begin{multline}\label{ineq:main2}
\left|\left\{A\in \mathcal{A}\mid \
      \text{\textnormal{$A$ is ramified for $\phi$}}\right\}\right|
+\left| \p_1(K)\cap \left(\phi^{-1}(\phi(\mathcal{A}))\right)\setminus \mathcal{A}\right|\geq 3.
\end{multline}
Then the number of $K$-rational periodic points of $\phi$ is bounded by $3\cdot7^{4s}+3$, where $s=|S|$. 
\end{theorem}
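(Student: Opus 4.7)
The plan is to use the hypothesis~\eqref{ineq:main2} to extract three ``anchor'' points in $\p_1(K)$ and then to bound all remaining $K$-rational periodic points of $\phi$ by an $S$-unit equation argument whose bound is independent of $d_\phi$. From~\eqref{ineq:main2} I can choose three distinct points $Q_1,Q_2,Q_3\in\p_1(K)$, each of which is either (a) a ramified periodic point belonging to $\mathcal{A}$, or (b) a $K$-rational element of $\phi^{-1}(\phi(\mathcal{A}))\setminus\mathcal{A}$. Only type (a) anchors need be periodic themselves, but both types provide an extra local constraint at each prime $\pid\notin S$: in case (a) the ramification at $Q_i$ produces a repeated factor in the local expansion of $\phi$ at $Q_i$, while in case (b) there is a companion $A_i\in\mathcal{A}$ with $\phi(Q_i)=\phi(A_i)$ and $Q_i\neq A_i$, which forces a nontrivial $\pid$-adic relation between any periodic $P$, $Q_i$ and $A_i$ whenever $P$ threatens to collide with $Q_i$ modulo $\pid$.

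After the anchors are fixed, I conjugate by an element of $\PGL_2(K)$ moving $Q_1,Q_2,Q_3$ to $0,1,\infty$. This preserves periodicity and preserves good reduction of $\phi$ outside a controlled enlargement $S'$ of $S$, whose cardinality is bounded linearly in $s=|S|$; the exponent $4s$ in the statement is calibrated so that the argument fits within $|S'|\leq 4s$. For every $K$-rational periodic point $P$ distinct from the anchors, the constraints above upgrade the standard fact that periodic points of a good-reduction map stay distinct modulo every good prime into the sharper statement that $P$ avoids the reductions of $Q_1,Q_2,Q_3$ at every $\pid\notin S'$, i.e., the affine coordinate $p$ of $P$ is an $S'$-unit and so is $p-1$. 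Setting $u=p$ and $v=1-p$ produces a solution of the classical $S$-unit equation $u+v=1$ in $R_{S'}^{*}$, and Evertse's theorem bounds the number of solutions by $3\cdot 7^{|S'|}\leq 3\cdot 7^{4s}$. Adding back the three anchors yields the claimed bound $3\cdot 7^{4s}+3$.

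The main obstacle is the verification, for case (b) anchors, that a non-periodic extra-fiber point really can substitute for a periodic anchor in the three-point argument. The usual approach demands three periodic anchors, so that periodicity and good reduction jointly force the needed distinctness statement; when $Q_i$ is only an extra preimage in $\phi^{-1}(\phi(\mathcal{A}))\setminus\mathcal{A}$, one must instead argue that a congruence $P\equiv Q_i\pmod{\pid}$ would imply $\phi(P)\equiv\phi(A_i)\pmod{\pid}$, and then extract a contradiction from the periodicity of $P$ and of $A_i$ together with the good reduction of $\phi$. Carrying out this case analysis uniformly across the two anchor types, in any combination summing to three under~\eqref{ineq:main2}, is the technical heart of the proof.
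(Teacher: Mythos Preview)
Your overall architecture matches the paper's: extract three anchors from~\eqref{ineq:main2}, prove that every other $K$-rational periodic point is $S$-integral with respect to all three, and finish with Evertse's bound on two-variable $S$-unit equations. The local analysis you sketch in your last paragraph---that a congruence $P\equiv Q_i\pmod\pid$ forces a contradiction with good reduction, both for ramified anchors and for extra-fiber anchors---is exactly the content of the paper's Lemmas~\ref{lem:rami_p} and~\ref{lem:tail_dist}, so you have correctly located the technical heart.

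The gap is the conjugation step. To preserve good reduction of $\phi$ under conjugation by $\gamma\in\PGL_2(K)$ you need $\gamma\in\PGL_2(R_\pid)$ for every $\pid\notin S'$, and the primes you must adjoin to $S$ are those dividing the determinant of a matrix representative of $\gamma$. That determinant is built from the pairwise cross-differences $a_ib_j-a_jb_i$ of the anchor coordinates, which are arbitrary nonzero elements of $K$; their prime factorizations are not controlled by $|S|$. (Over $\QQ$ with $S=\{\infty\}$, anchors $0,1,N$ force $S'$ to contain every prime dividing $N(N-1)$.) So the assertion $|S'|\le 4s$ is false in general, and your bound degrades to $3\cdot7^{4|S'|}$ with $|S'|$ uncontrolled. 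The remedy is simply to omit the conjugation: once $\delta_\pid(P,Q_i)=0$ is established for all $\pid\notin S$ using good reduction outside the \emph{original} $S$, write $P=[x:y]$ in $S$-coprime coordinates, note that the three linear forms $xb_i-ya_i$ are $S$-units, and use their linear dependence to get a two-variable $S$-unit equation over $R_S^*$; Evertse then gives $3\cdot7^{4s}$ directly (this is the paper's Lemma~\ref{Sint3p}). The exponent $4s$ is Evertse's own constant, not an artifact of enlarging $S$.
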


The left hand side of \eqref{ineq:main2} counts the number of ramified points in $\mathcal{A}$ together with the number of non-periodic points whose images are periodic points in $\phi(\mathcal{A})$. 

\begin{example}
Let $f(x)=x^2(x-1)g(x)$ where $g(x)$ is \emph{any} polynomial defined over a number field $K$ with good reduction outside of a finite set of places $S$ containing all the archimedean ones. Then $f(x)$ has at most $3\cdot7^{4s}+3$ periodic points, where $s=|S|$. In fact, take $\mathcal{A}=\{0,\infty\}$; these are two ramified periodic points, and together with $1$ whose image $0$ is in $\phi(\mathcal{A})$, we have at least 3 in the left hand side of \eqref{ineq:main2}.
\end{example}

We obtain the following corollary from the proof of Theorem~\ref{thm:main2}.
\begin{corollary}\label{cor:everywhere-good-reduction2}
Let $\phi\colon\Proj_1\to\Proj_1$ be a rational map of degree $d\geq 2$ defined over $\QQ$ with everywhere good reduction. Suppose there exists a set $\mathcal{A}\subset\p_1(K)$ as in Theorem~\ref{thm:main2}, then the number of $\QQ$-rational periodic points is at most $4$.
\end{corollary}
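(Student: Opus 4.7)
The proof will closely mirror the proof of Theorem~\ref{thm:main2}, specialized to the case $K=\QQ$ and $S=\{\infty\}$. The essential simplification is that $s=|S|=1$ and the group of $S$-units is $\mathcal{O}_S^*=\ZZ^\times=\{\pm 1\}$; consequently every $S$-unit equation of the form $u+v=1$ with $u,v\in\mathcal{O}_S^*$ has \emph{no} solutions at all, since $(u,v)\in\{(\pm 1,\pm 1)\}$ gives $u+v\in\{-2,0,2\}$. This extreme paucity of units is what causes the bound $3\cdot 7^{4s}+3$ to collapse all the way down to $4$.

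The first step is to extract three distinguished $\QQ$-rational points provided by the hypothesis \eqref{ineq:main2}: either ramified periodic points of $\phi$ lying in $\mathcal{A}$, or $\QQ$-rational non-periodic points whose images lie in $\phi(\mathcal{A})$. Call these three points $A_1,A_2,A_3$; together with the periodic images of any non-periodic $A_i$ they account for the ``$+3$'' term of the bound in Theorem~\ref{thm:main2}. Using a $\PGL_2(\QQ)$ coordinate change, which preserves good reduction at every finite prime, normalize so that $\{A_1,A_2,A_3\}=\{0,1,\infty\}$.

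It remains to bound the number of additional $\QQ$-rational periodic points $P\notin\{0,1,\infty\}$. In the proof of Theorem~\ref{thm:main2}, each such $P$ together with its $\phi$-orbit produces a chain of $S$-unit relations obtained from the preservation of mod-$\mathfrak p$ coincidence patterns under a map of good reduction; Evertse-type bounds on these $S$-unit equations give rise to the factor $7^{4s}$. Specializing to $S=\{\infty\}$ over $\QQ$, the $S$-unit equations in each of the four steps admit essentially no non-trivial solutions, so at most one further periodic point $P$ survives, giving the total bound $3+1=4$.

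The main obstacle is to track, step by step through the proof of Theorem~\ref{thm:main2}, precisely which $S$-unit equations arise and to verify that each one collapses to admit at most one simultaneous solution when $S=\{\infty\}$. This demands a careful case analysis distinguishing fixed points from periodic orbits of period $\geq 2$, and in particular confirming that the two rigidity mechanisms---ramification at the $A_i$ and coincidence with preimages of the $\phi(A_i)$---constrain the orbit of any candidate extra $P$ enough to rule out all but one configuration.
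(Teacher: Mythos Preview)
Your overall strategy---specialize the proof of Theorem~\ref{thm:main2} to $K=\QQ$, $S=\{\infty\}$, where $R_S^*=\{\pm 1\}$---is exactly what the paper does, but your execution contains concrete errors and suggests a misunderstanding of how the proof of Theorem~\ref{thm:main2} actually works.

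First, the normalization step is wrong: a $\PGL_2(\QQ)$ coordinate change does \emph{not} preserve everywhere good reduction; only $\PGL_2(\ZZ)=\PGL_2(R_S)$ does, and you cannot in general move three arbitrary $\QQ$-rational points to $\{0,1,\infty\}$ by an element of $\PGL_2(\ZZ)$. Fortunately this normalization is unnecessary.

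Second, and more importantly, the relevant unit equation is not $u+v=1$ but rather $ax+by=1$ with nonzero coefficients $a,b\in\QQ$. Over $\{\pm 1\}$ this has at most \emph{one} solution (at most one of $\pm a\pm b$ equals $1$ when $a,b\neq 0$), not zero; this single solution is precisely the source of the ``$+1$'' in $3+1=4$, which you assert but do not justify.

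Third, your reference to ``four steps'' and tracking ``the factor $7^{4s}$'' misreads the architecture of Theorem~\ref{thm:main2}. There are no four steps: the proof goes through the Three-Point Lemma, which in turn reduces (via Lemmas~\ref{lem:rami_p} and~\ref{lem:tail_dist}) to bounding the $S$-integral points of $\p_1\setminus\{Q_1,Q_2,Q_3\}$ by a \emph{single} two-variable $S$-unit equation (Lemma~\ref{Sint3p}). The constant $3\cdot 7^{4s}$ is simply Evertse's bound for that one equation. The paper's proof of the corollary is therefore one line: replace Evertse's bound by the sharp value $B=1$ valid over $\{\pm 1\}$, giving $B+3=4$. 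No case analysis on fixed points versus longer periods is needed.
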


\begin{remark-intro} \label{rem:ramified-cycles}
  Theorem~\ref{thm:main2} and Corollary~\ref{cor:everywhere-good-reduction2} are still true if instead of ramified periodic points we take periodic points with ramified points in their orbits (see Remark~\ref{rem:ramified-cycles-part-two}). We say that such a point \emph{belongs to a ramified cycle}. In particular, under this modification Corollary~\ref{cor:everywhere-good-reduction2} implies that there is no rational map defined over $\QQ$ with everywhere good reduction and more than four $\QQ$-rational points belonging to a ramified cycle.
\end{remark-intro}

In \cite{MS1} Morton and Silverman proved that if 
   $\phi\colon\Proj_1\to\Proj_1$ is a rational map of degree $d\geq 2$
   defined over a number field $K$, with good reduction at all but $t-2$ primes
   of $K$, and 
   $P\in\mathbb{P}_1(K)$ is a periodic point for $\phi$ with
   period $n$, then
\begin{equation} \label{bound:MS}
n\leq \left(12t\log(5t)\right)^{4\left[K:\mathbb{Q}\right]}.
\end{equation}
The first author studied in \cite{C} the same problem considered by Morton and Silverman, but for preperiodic points. He proved that for $\phi,K$ and $S$ as in Thereom~\ref{MainT}, and $P\in\p_1(K)$ a preperiodic point for $\phi$, we have
\begin{equation} \label{bound:C}
|O_\phi(P)|\leq \left[e^{10^{12}}(s+1)^8(\log(5(s+1)))^8\right]^{s},
\end{equation}
where $s=|S|$.
Note that the bounds in \eqref{bound:MS} and \eqref{bound:C} are independent of the degree of $\phi$. 
 
By using elementary techniques (cf.\ the proof of \cite[Corollary
1.1]{CP}) and the bounds \eqref{bound:MS} and \eqref{bound:C} one can
obtain bounds for the cardinalities of the sets Per$(\phi,K)$ and
PrePer$(\phi, K)$ that are polynomial in the degree $d$ of $\phi$,
with large exponents depending on the cardinality of $S$. In 
Theorem~\ref{MainT} we prove in fact that the cardinality of the set
Per$(\phi,K)$ can be \emph{linearly} bounded with respect to $d$ and coefficients depending only on $|S|$. 

Benedetto in \cite{B} gave a bound for the size of
PrePer$(\phi,K)$ in terms of the set $S$ of places of bad reduction of
$\phi$, for polynomial maps $\phi\in{K[t]}$. Benedetto's bound is of
the form $O(|S|\log|S|)$ where the big--O constant is essentially
$(d^2-2d+2)/\log d$ (of course, the bound also depends on the degree
of $K$ over $\QQ$). Benedetto's techniques are quite different from
ours and involve the filled Julia set associated to the map $\phi$. 
We should also mention that the goal of Benedetto was to minimize
dependence on $|S|$, while we tried to minimize dependence on the
degree $d$. 
%Originally we had hoped to prove that for the set of periodic points of period at least $3$ there exists a bound that depends only on $|S|$, but our techniques did not allow for this. 

To obtain our results, we make use of well known explicit bounds on
the number of 
solutions for equations of
the form 
\begin{equation}\label{nvars-unit-equation}
a_1x_1+\ldots+a_nx_n=1
\end{equation}
with coefficients $a_i\in\mathbb{C}^*$ and the  solutions 
$(x_1,\ldots,x_n)$ are in $\Gamma^n \subset
\left(\mathbb{C}^*\right)^n$, where $\Gamma$ is a finitely generated
subgroup of $\mathbb{C}^*$ (when $\Gamma$ is a group of $S$-units in a
number field,
equation \eqref{nvars-unit-equation} is known as the \emph{$S$-unit equation}
in the literature). When $n>2$ the solutions must also satisfy a
\textit{nondegeneracy} property, such that $\displaystyle{\sum_{i\in{I}}} a_ix_i
\neq 0$ for every nonempty subset $I$ of $\{1,\ldots,n\}$. We will
denote a bound for the number of solutions of
$\eqref{nvars-unit-equation}$ by $c(n,r)$, where $r$ is the rank of
the group $\Gamma$, and in the special case when $n=2$ we denote the bound
by $b(r)$. 

When $\Gamma$ is the group of $S$-units of a number field $K$, the
best known bounds are due to Evertse (see \cite{E84, E95}) : $c(n,s-1)=2^{35n^4s}$ and
$b(s-1)=3\cdot7^{4s}$, where $s=|S|$. However, for technical reasons
explained in section \ref{scc},
we will need to work with extensions of the group of $S$-units, so that
more general bounds are needed. One can use $b(r) = 2^{8(2r+2)}$ (see
\cite{B.S.1}; note that the bound given there is different due to
notational differences and the fact that we need a bound for $ax+by=1$
rather than for $x+y=1$) and $c(n,r)=e^{(6n)^{3n}(nr+1)}$ (see
\cite{E.S.S.1}; same caveat as the previous reference). Since the groups we will be dealing with are of rank
$r=|S|-1$ (see section \ref{scc}) we will use
throughout the article the notation $B(s)=b(s-1)$ and
$C(n,s)=c(n,s-1)$, where $s=|S|$. 
\begin{proposition} \label{prop:estimates}
Let $K, S, \phi,\psi$ and $d_\phi$ be as in Theorem~\ref{MainT}.
Using the notations $B(s)$ and $C(n,s)$ defined above, the numbers $\kappa$ and $\lambda$ of Theorem \ref{MainT} can be chosen as follows: 
$$\kappa=3B(|S|)+13\quad,\quad \lambda=27B(|S|)+C(5,|S|)+6C(3,|S|)+31.$$
\end{proposition}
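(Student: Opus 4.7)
My plan is to derive the explicit values of $\kappa$ and $\lambda$ by carrying through the proof of Theorem \ref{MainT} with every constant made explicit. The shape of the formulas suggests the counting of $K$-rational periodic points of $T := \psi \circ \phi$ is organized by minimal period, with $S$-unit equation bounds of various arities invoked in each stratum and then aggregated.

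First I would handle fixed points of $T$. Using the bijection $P \mapsto \phi(P)$ between the $K$-rational fixed points of $T$ and those of $\phi\circ\psi$, and after normalizing three distinguished $K$-rational fixed points to $\{0,1,\infty\}$ via an element of $\PGL_2(K)$, I expect each further candidate fixed point to yield a $2$-variable $S$-unit equation with at most $B(s)$ solutions. The factor $3$ in $3B(s)$ appearing in $\kappa$ should arise from three analogous enumerations performed across the orbit structure (one for each choice of normalizing triple compatible with the $\phi$-fibration), while the branching of $\phi$ over a fixed point of $\phi\circ\psi$ introduces the $d_\phi$ dependence. The additive $27B(s)$ in $\lambda$ should absorb the exceptional configurations with fewer than three $K$-rational fixed points or with a normalizing triple containing a ramified point.

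Next, for points of exact period $2$ and $3$, I would set up the orbit relations and, after suitable normalization, reduce to multivariable $S$-unit equations. The terms $6\,C(3,s)$ and $C(5,s)$ in $\lambda$ suggest that six period-$2$ symmetry classes each contribute $C(3,s)$, while a single period-$3$ configuration contributes $C(5,s)$. Cycles of period $\geq 4$, which would otherwise threaten the linear-in-$d_\phi$ shape, must either be ruled out or bounded by an absolute constant via a rigidity argument in the spirit of \eqref{bound:MS}. All remaining integer contributions from base-case enumerations and from the finitely many exceptional configurations should aggregate to the $+13$ in $\kappa$ and the $+31$ in $\lambda$.

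The chief obstacle I anticipate is ensuring that cycles of period $\geq 4$ contribute only a term depending on $|S|$ (absorbed into $\lambda$) rather than a term growing with $d_\phi$, so that the overall bound retains its linear-in-$d_\phi$ shape; and, secondarily, bookkeeping the low-cardinality base cases in which the $\PGL_2(K)$ normalization is unavailable, since these are the source of the somewhat intricate integer constants $13$, $27$, $31$. Once both are dispatched, the claimed explicit values of $\kappa$ and $\lambda$ follow by straightforward summation across the cases.
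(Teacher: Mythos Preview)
Your proposal is built on the wrong decomposition. The paper does \emph{not} stratify the periodic points of $T=\psi\circ\phi$ by their minimal period, and the constants $3B(|S|)$, $6C(3,|S|)$, $C(5,|S|)$ have nothing to do with fixed points, $2$-cycles, or $3$-cycles. The actual mechanism is Lemma~\ref{lemmapcomp}: for any two $K$-rational periodic points $P,Q$ of $T$ and any $\pid\notin S$, one has $\dpid(P,Q)=\dpid(\phi(P),\phi(Q))$. Picking four distinct periodic points $A,C,E,G$, every periodic point $P$ therefore lands in the set $\mathcal{P}$ of the Four-Point Lemma, and the whole proposition reduces to tracking constants through \emph{that} lemma. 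The case analysis there is about the rank of the $4\times4$ matrix $M$ attached to the system \eqref{sys} and about vanishing subsums of the $6$-term unit equation \eqref{eq6a}, not about dynamics. Concretely: Case~1 (fourth row dependent on two others, three choices) gives $3(d+1)B(|S|)$; the ``no vanishing subsum'' subcase of Case~2 gives $C(5,|S|)$; subcase ``3+3'' gives $4d+12B(|S|)+10$; subcase ``4+2'' gives $6C(3,|S|)+12B(|S|)$; subcase ``2+2+2'' gives $9d+21$; and $+1$ for $[1:0]$, then $-1$ because one can conjugate periodic points away from $[1:0]$.

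Your anticipated ``chief obstacle'' (bounding cycles of period $\geq 4$ independently of $d_\phi$) is therefore a nonissue in the real proof, while the genuine work---setting up the $\pid$-adic distance identity via good reduction, and the delicate vanishing-subsum bookkeeping---is absent from your plan. A period-stratified argument along the lines you sketch would not produce these constants: normalizing three fixed points to $\{0,1,\infty\}$ and counting further fixed points by $2$-variable $S$-unit equations does not naturally yield a term linear in $d_\phi$ with slope $3B(|S|)$, and there is no reason six ``period-$2$ symmetry classes'' should each cost $C(3,|S|)$.
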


As in \cite{MS}, we define the \emph{$\mathfrak{p}$-adic chordal valuation} on
$\mathbb{P}_1(K)$ for a finite place $\mathfrak{p}$ in a number field
$K$ by 
\begin{equation*}
\dpid(P_1,P_2) = \nu_\mathfrak{p} (x_1y_2-x_2y_1)-\min\{\nu_\mathfrak{p}(x_1),\nu_\mathfrak{p}(y_1)\}-\min\{\nu_\mathfrak{p}(x_2),\nu_\mathfrak{p}(y_2)\}
\end{equation*}
for points $P_1=[x_1,y_1], P_2=[x_2,y_2]$ in $\mathbb{P}_1(K)$ ($\dpid$ is called a \emph{logarithmic $\mathfrak{p}$-adic distance} function in \cite{MS}, because $\dpid$ is $-log$ of the usual $\mathfrak{p}$-adic chordal metric \cite[\S{2.1}]{SIL1}).

Our Theorem~\ref{MainT} follows from the next lemma, which is
free from dynamical arguments, and might be useful in other
settings as well.

\begin{mlem}\label{lem:4P} Let $K$ be a number field and $S$ a finite set of places of $K$ containing all the archimedean ones. 
Let $\phi\colon \p_1\to\p_1$ be a rational map of degree $d\geq 2$
defined over $K$ with good reduction outside $S$. Let
$A,C,E,G\in\p_1(K)$ be four distinct points such that also the images $\phi(A),\phi(C),\phi(E),\phi(G)$
are distinct. Let $\mathcal{P}$ be the set of points
$P\in \p_1(K)$ satisfying the following four equations for all $\pid\notin{S}$.
\begin{align}\label{main-lemma-eq}
\begin{split}
\dpid(A,P)&=\dpid(\phi(A),\phi(P)),\quad \dpid(C,P)=\dpid(\phi(C),\phi(P)), \\
\dpid(E,P)&=\dpid(\phi(E),\phi(P)),\quad \dpid(G,P)=\dpid(\phi(G),\phi(P)).
\end{split}
\end{align} 
Then $\mathcal{P}$ is finite and 
\beq|\mathcal{P}|\leq \label{bound}(3B(|S|)+13)d+27B(|S|)+C(5,|S|)+6C(3,|S|)+32.\eeq
\end{mlem}

The lemma is proved by showing that the $\mathfrak{p}$-adic chordal
distance properties in the lemma induce unit equations as in
\eqref{nvars-unit-equation}. 

The $S$-unit equation theorem for the equation $u+v=1$, is equivalent
to saying that 
the set of points of the affine curve
$\p_1\setminus \{0,\infty,1\}$ that are $S$-integral with respect to the divisor
$\{0,\infty,1\}$ is finite.
This in turn is equivalent to saying that
the set of points $P\in \p_1(K)$ such that $\dpid(P,Q)=0,$ for all
$Q\in \{0,\infty,1\}$ and all $\pid\notin{S},$ is a finite set. 
Presented in this way, the $S$-unit equation theorem is similar
to our Four-Point Lemma.

Can the Four-Point Lemma be improved? The linear dependence on the degree $d$ in the lemma cannot be eliminated, as can be seen from examples \eqref{dfixed} and \eqref{dperiod-two}. It is also easy to show that no analogous result holds for just one or two points. 
% {\color{green}Probably} it is possible to prove a three-point lemma which is exponential in $d$, but 
We do not know whether a general three-point lemma with a bound that is polynomial in the degree of the map can be obtained. Following a suggestion of P.\ Corvaja, however,
we were able to prove a useful three-point lemma under stronger hypotheses. The bound obtained is independent of the degree of the map $\phi$, and allows in particular to prove Theorem~\ref{thm:main2}.

\begin{m3lem}\label{3pointsL}
Let $K$ be a number field and $S$ be a finite set of places of $K$ of cardinality $s=|S|$ containing all the archimedean ones. Let $\phi\colon \p_1\to\p_1$ be an endomorphism defined over $K$  of degree $\geq 2$ with good reduction outside $S$.   Let $\mathcal{A}\subset\p_1(K)$ be a set with the property
\begin{multline}\label{inq3pl}\left|\left\{A\in \mathcal{A}\mid \
      \text{\textnormal{$A$ is ramified for $\phi$}}\right\}\right|
+\left| \p_1(K)\cap \left(\phi^{-1}(\phi(\mathcal{A}))\right)\setminus \mathcal{A}\right|\geq 3.\end{multline}
%\begin{multline}\label{inq3pl}\left|\left\{A\in \mathcal{A}\mid \
%      \text{\textnormal{$A$ is a ramification point for $\phi$}}\right\}\right|+
%\\+\left| \p_1(K)\cap \left(\phi^{-1}(\phi(\mathcal{A}))\right)\setminus \mathcal{A}\right|\geq 3.\end{multline}
Then the set 
$$\mathcal{P}_\mathcal{A}=\{P\in\p_1(K)\mid \dmfp(P,A)=\dmfp(\phi(P),\phi(A))\ \text{for all $A\in\mathcal{A}$ and $\pid\notin S$}\}$$
is finite and its cardinality is bounded by $3\cdot 7^{4s}$. 
\end{m3lem}
 
\begin{remark-intro}
A set $\mathcal{A}$ as in the Three-Point Lemma exists if and only if (at least) one of the
following conditions is satisfied:
\begin{itemize}
\item $\phi$ has three distinct $K$-rational ramified points.
\item $\phi$ has two distinct ramified points $A_2,A_3\in\p_1(K)$ and
  a third point $A_1\in\p_1(K)$ (not necessarily distinct from $A_2$
  and $A_3$) such that the fiber $\phi^{-1}(\phi(A_1))$ contains at least one point $B\notin\{A_1,A_2,A_3\}$.
\item $\phi$ has one ramified point $A_3\in\p_1(K)$ and there
  exists a set $\mathcal{B}=\{A_i\}_{1\leq i\leq k}\subset \p_1(K)$
  with $1\leq{k}\leq 2$ such that the set $\phi^{-1}(\phi(\mathcal{B}))\setminus\left(\mathcal{B}\cup\{A_3\}\right)$
  contains at least two $K$-rational points. 
\item There exists a set $\mathcal{B}=\{A_i\}_{1\leq i\leq k}\subset\p_1(K)$ with
  $1\leq{k}\leq 3$ such that the set $\phi^{-1}(\phi(\mathcal{B}))\setminus
  \mathcal{B}$ contains at least three $K$-rational points in $\p_1(K)$. 
\end{itemize}
\end{remark-intro}

By applying the \hyperref[3pointsL]{Three-Point Lemma} we also prove the
following theorem.

\begin{theorem}\label{thm:3points}
Let $K, S, \phi$ and $\psi$ be as in Theorem~\ref{MainT}.
% be a number field and $S$ a finite set of places of $K$
%   containing all the archimedean ones with cardinality $s=|S|$. Let
%   $\phi, \psi\colon\Proj_1\to\Proj_1$ be rational maps defined over $K$, where the degree of $\phi$ is greater than 1. Assume that 
%   both maps have good reduction outside $S$ and
 If there exists a set $\mathcal{A}\subset {\rm Per}(\psi\circ\phi,K)$ verifying condition \eqref{inq3pl} with respect to $\phi$ then 
 \beq\label{eq:3points}|{\rm Per}(\psi\circ\phi,K)|\leq 3\cdot 7^{4s} + 3 .\eeq 
\end{theorem}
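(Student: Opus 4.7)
The plan is to deduce Theorem~\ref{thm:3points} directly from the \hyperref[3pointsL]{Three-Point Lemma} applied to the map $\phi$, using the periodicity for $\Phi:=\psi\circ\phi$ to verify the lemma's hypothesis on distances. The core observation is that any $K$-rational periodic point of $\Phi$ outside $\mathcal{A}$ must lie in the set $\mathcal{P}_{\mathcal{A}}$ of the lemma; since the lemma bounds that set by $3\cdot 7^{4s}$, once we reduce $\mathcal{A}$ to have at most $3$ elements we obtain the stated bound $3\cdot 7^{4s}+3$.

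First I would reduce to the case $|\mathcal{A}|\leq 3$. Starting from any $\mathcal{A}$ satisfying \eqref{inq3pl}, a short case analysis on the number $r$ of ramified points of $\phi$ in $\mathcal{A}$ produces a subset $\mathcal{A}'\subset\mathcal{A}$ of cardinality at most $3$ that still satisfies \eqref{inq3pl}. For instance, if $r\geq 3$, take three ramified points of $\mathcal{A}$; if $r=2$, pick the two ramified points $A_1,A_2$ together with some $A_3\in\mathcal{A}$ whose fiber $\phi^{-1}(\phi(A_3))$ contains a $K$-rational point outside $\mathcal{A}$; and similarly in the cases $r=1$ and $r=0$. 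In all cases $\mathcal{A}'\subset\mathcal{A}\subset{\rm Per}(\Phi,K)$, $|\mathcal{A}'|\leq 3$, and \eqref{inq3pl} still holds for $\mathcal{A}'$.

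The key technical step is to show that every $P\in{\rm Per}(\Phi,K)\setminus\mathcal{A}'$ lies in $\mathcal{P}_{\mathcal{A}'}$, i.e.\ satisfies $\dmfp(P,A)=\dmfp(\phi(P),\phi(A))$ for every $A\in\mathcal{A}'$ and every $\mfp\notin S$. I would use the standard good-reduction inequality: for any rational map $f$ of good reduction at $\mfp$, one has $\dmfp(f(X),f(Y))\geq \dmfp(X,Y)$. Applied to $\phi$ and $\psi$, this gives
\begin{equation*}
\dmfp(\Phi(X),\Phi(Y))=\dmfp(\psi(\phi(X)),\psi(\phi(Y)))\geq \dmfp(\phi(X),\phi(Y))\geq \dmfp(X,Y).
\end{equation*}
Now let $N$ be a common multiple of the $\Phi$-periods of $P$ and $A$; iterating the inequality along the orbit and using $\Phi^N(P)=P$, $\Phi^N(A)=A$ forces the chain $\dmfp(P,A)\leq \dmfp(\Phi(P),\Phi(A))\leq\cdots\leq \dmfp(\Phi^N(P),\Phi^N(A))=\dmfp(P,A)$ to consist of equalities. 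In particular $\dmfp(\Phi(P),\Phi(A))=\dmfp(P,A)$, and sandwiching with the intermediate distance $\dmfp(\phi(P),\phi(A))$ yields the desired equality.

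With this in hand, the final step is immediate: the \hyperref[3pointsL]{Three-Point Lemma} applied to $\phi$ and $\mathcal{A}'$ gives $|\mathcal{P}_{\mathcal{A}'}|\leq 3\cdot 7^{4s}$, and since ${\rm Per}(\Phi,K)\subset \mathcal{A}'\cup \mathcal{P}_{\mathcal{A}'}$ we conclude $|{\rm Per}(\Phi,K)|\leq |\mathcal{A}'|+|\mathcal{P}_{\mathcal{A}'}|\leq 3+3\cdot 7^{4s}$. The only genuinely delicate point is the combinatorial reduction in step one; the dynamical argument itself is clean because the periodicity for $\Phi$ converts the good-reduction inequality into an equality that passes through both factors of the composition.
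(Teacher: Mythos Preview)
Your proposal is correct and follows essentially the same approach as the paper: use the distance-preserving property for periodic points of $\psi\circ\phi$ (which is exactly Lemma~\ref{lemmapcomp}, and which you re-derive from the good-reduction inequality) to place ${\rm Per}(\psi\circ\phi,K)$ inside $\mathcal{P}_{\mathcal{A}}$, and then apply the \hyperref[3pointsL]{Three-Point Lemma}. You are in fact more explicit than the paper's proof about reducing to $|\mathcal{A}|\leq 3$ and about where the extra $+3$ comes from; the paper only alludes to this reduction in the remark following the Three-Point Lemma and in the proof of Corollary~\ref{cor:everywhere-good-reduction2}.
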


  \begin{corollary}\label{Cor:d2}
Let $K,S,\phi$ and $\psi$ be as in Theorem~\ref{thm:3points}, such that the degree of $\phi$ is $2$. 
Then 
  $$|{\rm Per}(\psi\circ\phi,K)|\leq 3\cdot 7^{4s} + 3.$$  
  \end{corollary}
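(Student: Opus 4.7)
The plan is to invoke Theorem~\ref{thm:3points} after constructing, from the set $P := {\rm Per}(\psi\circ\phi, K)$, a subset $\mathcal{A}\subseteq P$ satisfying condition \eqref{inq3pl} with respect to $\phi$. Since $3\cdot 7^{4s}+3\geq 7206$ for every $s\geq 1$, I need only handle the case $|P|\geq 6$; smaller values of $|P|$ already satisfy the claimed bound.

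The construction will rest on two elementary facts about the degree-$2$ map $\phi$. Riemann--Hurwitz gives at most two ramification points of $\phi$ in $\p_1(\bar{K})$, so the set $R := \{A \in P : A \text{ is ramified for } \phi\}$ has $|R| \leq 2$. Moreover, for any unramified $A \in \p_1(K)$ the fiber $\phi^{-1}(\phi(A)) = \{A, A'\}$ has its second element $A'$ automatically $K$-rational (the quadratic defining the fiber splits off a known $K$-linear factor, forcing the other factor also to be $K$-linear) and automatically unramified (otherwise $A'$ would be the unique element of the fiber, contradicting $A \neq A'$). I then split $U := P \setminus R$ as $U_1 \sqcup U_2$, where $U_1 := \{A \in U : A' \in U\}$ and $U_2 := U \setminus U_1$. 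The involution $A \leftrightarrow A'$ pairs up the elements of $U_1$; let $T_1 \subseteq U_1$ be a set of one representative from each pair, so $|T_1| = |U_1|/2$. My candidate set is
\[\mathcal{A} := R \cup U_2 \cup T_1 \subseteq P.\]

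To verify condition \eqref{inq3pl}, the first term equals exactly $|R|$. For the second term, each $A \in U_2$ contributes its partner $A' \in \p_1(K) \setminus P \subseteq \p_1(K) \setminus \mathcal{A}$ (since $A'$ is unramified and not in $U$, hence not in $R$ either, hence not in $P$), and each $A \in T_1$ contributes its partner in $U_1 \setminus T_1 \subseteq \p_1(K) \setminus \mathcal{A}$. All these partners are pairwise distinct, because distinct elements of $\mathcal{A}$ have distinct $\phi$-images (a consequence of the degree-$2$ fiber structure combined with the construction of $T_1$, $U_2$, $R$). Hence the left-hand side of \eqref{inq3pl} is bounded below by
\[|R| + |U_2| + |U_1|/2 \;=\; |P| - |U_1|/2 \;\geq\; |P|/2 \;\geq\; 3,\]
and Theorem~\ref{thm:3points} yields $|{\rm Per}(\psi\circ\phi, K)| \leq 3 \cdot 7^{4s} + 3$.

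The only delicate step is the structural observation that the fiber-partner $A'$ of any $K$-rational unramified point is again both $K$-rational \emph{and} unramified; once this is secured, the rest is routine bookkeeping, and no new dynamical ingredient beyond Theorem~\ref{thm:3points} is required for the corollary.
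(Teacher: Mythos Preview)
Your argument is correct. The paper's route is shorter, however, because it exploits one extra observation that you did not use: the map $\psi\circ\phi$ is injective on its set of periodic points (if $P,Q$ are periodic with $(\psi\circ\phi)(P)=(\psi\circ\phi)(Q)$, iterate a common period minus one to get $P=Q$), and hence so is $\phi$ when restricted to ${\rm Per}(\psi\circ\phi,K)$. This forces your set $U_1$ to be empty, since the fiber-partner $A'$ of an unramified periodic point $A$ satisfies $\phi(A')=\phi(A)$ with $A'\neq A$, so $A'$ cannot be periodic. With this in hand the paper simply takes $\mathcal{A}$ to be \emph{any} three $K$-rational periodic points; each is either ramified or has a $K$-rational non-periodic partner $A'\notin\mathcal{A}$, and these partners are pairwise distinct because $\phi$ already separates periodic points. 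Your careful decomposition $R\cup U_2\cup T_1$ is engineered precisely to avoid fiber collisions inside $\mathcal{A}$ without invoking this injectivity, which is why the construction works but needs the threshold $|P|\geq 6$ rather than the paper's $|P|\geq 3$ (and hence $|P|\geq 4$ after accounting for the three points one starts from).
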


In general we have seen (examples \eqref{dfixed} and \eqref{dperiod-two}) that it is not possible to bound the
cardinality of the set ${\rm Per}(\psi,K)$ independently of the degree
of $\psi$. Our corollary affirms that if we take the map
$\psi(x^2)$ instead of $\psi(x)$ (in the affine model), then the cardinality of ${\rm Per}(\psi(x^2),K)$ has a bound independent on the
degree of $\psi$, depending only on the cardinality of the set
$S$, where $\psi$ has good reduction outside $S$. Note that we can obtain a similar bound by applying Theorem~\ref{MainT} with $d=2$, but the Three-Point Lemma provides a better bound.

One of the main ingredients needed in our proofs of Theorems \ref{thm:main2} and \ref{thm:3points} is Corollary \ref{cor:2points}. 
During the preparation of the article, the authors became aware that Sebastian Troncoso was working on similar problems. Troncoso was kind enough to share a preprint with us, 
and we realized that it already contains the same result as in Corollary \ref{cor:2points} (see \cite[Proposition 2.19]{TR1}).

% By taking $\psi$ in Theorem \ref{thm:3points} and Corollary
% \ref{Cor:d2} to be the identity map, we obtain the bound
% $$|{\rm Per}(\phi,K)|\leq 3\cdot 7^{4s} + 3.$$  

% For a given periodic point $P_0$ for a map $\phi$ we call \emph{tail} of $P_0$ with respect $\phi$ a set of points $\{P_{-n},P_{-n+1},\ldots,P_{-1}\}$ with the property that $\phi(P_{i})=P_{i+1}$ for each $i\in\{-n,\ldots,-1\}$ and $P_{-1}$ is not a periodic point for $\phi$, which implies that all points of the tail are not periodic points, namely are \emph{strict} preperiodic points for $\phi$.

% \begin{corollary}\label{cor:3-tails}
% Let $K$ be a number field and $S$ a finite set of places of $K$
%   containing all the archimedean ones with cardinality $s=|S|$. Let
%   $\phi\colon\Proj_1\to\Proj_1$ be a map with good reduction outside $S$ admitting at least three periodic points having a $K$--rational point in the tail. Then 
%   $$|{\rm Per}(\phi,K)|\leq 3\cdot 7^{4s} .$$  
% \end{corollary}

{\color{ForestGreen} 

\normalcolor

We remark that the idea of using unit equation theorems in the setting
of dynamical
systems is originally due to Narkiewicz (\cite{NA}). 

It would be interesting to extend our result and obtain a bound for
the cardinality of PrePer$(\phi,K)$. One needs some improvements of
our methods, but many of our ideas can be useful also in this more
general setting. 
%The dependence on the degree $d$ of the map $\phi$ will
%probably no longer be linear. We expect (at least) a quadratic dependence on the
%degree $d$. 

\noindent\textbf{Acknowledgements.} 
We would like to thank Pietro Corvaja for his suggestions in the direction of the Three--Point Lemma. 
We also thank Fabrizio Barroero, Rob Benedetto, 
Laura Capuano, Jan-Hendrik Evertse, Patrick Ingram, Aaron Levin, Olaf Merkert, W{\l}adys{\l}aw Narkiewicz,  Sebastian Troncoso, and Umberto Zannier for
their comments and suggestions. The second author was partially
supported by the ERC-Grant ``Diophantine Problems," No.\ 267273 during the preparation of this article.
Finally, we are grateful to the referee for the comments, corrections and suggestions.
\section{Properties of the $\pid$-adic chordal valuation}
Recall that we defined the $\mathfrak{p}$-adic \emph{chordal valuation} on
$\mathbb{P}_1(K)$ for a finite place $\mathfrak{p}$ in a number field
$K$ by 
\begin{equation*}
\dpid(P_1,P_2) = \nu_\mathfrak{p} (x_1y_2-x_2y_1)-\min\{\nu_\mathfrak{p}(x_1),\nu_\mathfrak{p}(y_1)\}-\min\{\nu_\mathfrak{p}(x_2),\nu_\mathfrak{p}(y_2)\}
\end{equation*}
for points $P_1=[x_1:y_1], P_2=[x_2:y_2]$ in $\mathbb{P}_1(K)$. {Note that $\dmfp(P_1,P_2)=0$ if and only if the two points $P_1$ and $P_2$ are distinct modulo $\pid$.}

\begin{proposition}\label{MS5.2}
Let $K$ be a number field and $\pid$ a finite place of $K$. Let $\phi\colon\Proj_1\to\Proj_1$ be a rational map with good reduction at $\pid$. Then for any $P,Q\in\p_1(K)$ we have
\bdm  \dpid(\phi(P),\phi(Q))\geq \dpid(P,Q).\edm
\end{proposition}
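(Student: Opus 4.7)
The plan is to unwind the definitions, reduce the inequality to an algebraic divisibility statement between resultant-like quantities, and then use the good reduction hypothesis to guarantee that no normalization correction spoils the bound.

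First I would normalize the homogeneous coordinates: choose representatives $P=[x_1:y_1]$ and $Q=[x_2:y_2]$ with $x_i,y_i\in R_\pid$ and $\min\{\nu_\pid(x_i),\nu_\pid(y_i)\}=0$, so that the two $\min$-terms in the definition of $\dpid(P,Q)$ vanish and $\dpid(P,Q)=\nu_\pid(x_1y_2-x_2y_1)$. By good reduction at $\pid$, I may write $\phi=[F:G]$ with $F,G\in R_\pid[X,Y]$ homogeneous of degree $d$ and $\mathrm{Res}(F,G)\in R_\pid^{*}$. The homogeneous coordinates of $\phi(P)$ and $\phi(Q)$ are then $(F(x_1,y_1),G(x_1,y_1))$ and $(F(x_2,y_2),G(x_2,y_2))$, both lying in $R_\pid^{2}$.

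The crucial algebraic step is the identity
\[
F(x_1,y_1)G(x_2,y_2)-F(x_2,y_2)G(x_1,y_1) \;=\; (x_1y_2-x_2y_1)\,H(x_1,y_1,x_2,y_2)
\]
for some $H\in R_\pid[x_1,y_1,x_2,y_2]$. I would justify this by noting that both sides are polynomial expressions, and that specializing $(x_1,y_1)=\lambda(x_2,y_2)$ yields $F(x_1,y_1)=\lambda^d F(x_2,y_2)$ and $G(x_1,y_1)=\lambda^d G(x_2,y_2)$, so the left-hand side vanishes on the variety $\{x_1y_2=x_2y_1\}$; hence $x_1y_2-x_2y_1$ divides it, and the quotient has coefficients in $R_\pid$ since $F,G$ do (one can check this explicitly by writing out the telescoping used in the classical Euler/Bezout identity for homogeneous forms). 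Applying $\nu_\pid$ gives $\nu_\pid\!\big(F(x_1,y_1)G(x_2,y_2)-F(x_2,y_2)G(x_1,y_1)\big)\geq\nu_\pid(x_1y_2-x_2y_1)$.

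The remaining point, and in my view the main subtlety, is verifying that the homogeneous coordinates $(F(x_i,y_i),G(x_i,y_i))$ are already normalized, i.e.\ $\min\{\nu_\pid(F(x_i,y_i)),\nu_\pid(G(x_i,y_i))\}=0$. This is exactly where the good reduction hypothesis enters: since $(x_i,y_i)$ reduces to a nonzero vector modulo $\pid$, and since $\mathrm{Res}(F,G)\in R_\pid^{*}$ implies that the reductions $\bar F,\bar G$ have no common projective zero over the residue field, at least one of $\bar F(\bar x_i,\bar y_i),\bar G(\bar x_i,\bar y_i)$ is nonzero. Hence the $\min$-terms vanish for $\phi(P)$ and $\phi(Q)$, and therefore
\[
\dpid(\phi(P),\phi(Q))=\nu_\pid\!\big(F(x_1,y_1)G(x_2,y_2)-F(x_2,y_2)G(x_1,y_1)\big)\geq\nu_\pid(x_1y_2-x_2y_1)=\dpid(P,Q),
\]
which concludes the argument.
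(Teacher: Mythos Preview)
Your proof is correct, but it takes a genuinely different route from the paper's. The paper first uses automorphisms in $\PGL_2(R_\pid)$ to reduce to the special case $P=\phi(P)=[0:1]$, then splits according to whether $Q$ reduces to $P$ modulo $\pid$; in the only nontrivial case it writes $\phi([X:Y])=[XF(X,Y):G(X,Y)]$ and reads off $\dmfp(\phi(P),\phi(Q))=v_\pid(x_QF(x_Q,y_Q))\geq v_\pid(x_Q)=\dmfp(P,Q)$ directly. Your argument instead stays symmetric in $P$ and $Q$, proving the algebraic divisibility $(x_1y_2-x_2y_1)\mid F(x_1,y_1)G(x_2,y_2)-F(x_2,y_2)G(x_1,y_1)$ in $R_\pid[x_1,y_1,x_2,y_2]$ and invoking good reduction only to certify that the image coordinates $(F(x_i,y_i),G(x_i,y_i))$ are already normalized. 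Your approach is essentially the original Morton--Silverman argument to which the paper refers; it is more conceptual and avoids any case analysis or coordinate change, at the cost of needing the (easy, via Gauss's lemma over the DVR $R_\pid$) verification that the quotient $H$ has $\pid$-integral coefficients. The paper's version trades that algebraic check for a short explicit computation after normalization.
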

{Proposition \ref{MS5.2} directly follows from \cite[Proposition 5.2]{MS}.}
We provide a short alternative proof of this proposition for the convenience of the reader.

\begin{proof}[Proof of Proposition \ref{MS5.2}]
Up to replacing $\phi$ by $g\circ\phi\circ{f},$ where $f,g$ are appropriate automorphisms in $\textrm{PGL}_2(R_\pid),$ where $R_\pid$ is the localization of the ring of integers of $K$ at $\pid$, we may assume that $P=[0:1]=\phi(P)$. If $Q$ does not equal $P$ modulo $\pid$ we are done because $\dmfp(P,Q)=0$. Therefore we assume now that $Q\equiv P \pmod{\pid}$, so that we can choose coordinates $x_Q, y_Q$ such that $Q=[x_Q:y_Q]$ with $v_\pid(x_Q)=n>0$ and $v_\pid(y_Q)=0$. Since $\phi$ fixes the point $[0:1]$ there exist two polynomial $F,G\in K[X,Y]$ with $\pid$-integral coefficients such that $\phi([X:Y])=[XF(X,Y):G(X,Y)]$ and $F,G$ do not have common factors modulo $\pid$ (also $G(x_Q,y_Q)$ must be a $\pid$-unit). Therefore we have
$$\dmfp(\phi(P),\phi(Q))=v_\pid(x_QF(x_Q,y_Q))\geq v_\pid(x_Q)=\dmfp(P,Q).$$
\end{proof}

\begin{lemma}\label{div}
Let $K$ be a number field and $\pid$ a finite place of $K$. Let $\phi\colon\Proj_1\to\Proj_1$ be a rational map defined over $K$ with good reduction at $\pid$. Let $P,Q\in \Proj_1(K)$ be two periodic points for $\phi$. Then
$$\dpid(P,Q)=\dpid(\phi(P),\phi(Q)).$$
\end{lemma}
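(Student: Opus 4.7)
The plan is to bootstrap the one-sided inequality of Proposition~\ref{MS5.2} into an equality by exploiting periodicity: since applying $\phi$ can only make the chordal valuation (weakly) grow, but iterating $\phi$ must eventually return $P$ and $Q$ to themselves, no strict growth can ever occur along the orbit.

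Concretely, let $n$ and $m$ be the periods of $P$ and $Q$ respectively, and set $N=\mathrm{lcm}(n,m)$, so that $\phi^N(P)=P$ and $\phi^N(Q)=Q$. A repeated application of Proposition~\ref{MS5.2} (noting that $\phi^k$ has good reduction at $\pid$ for every $k\geq 1$, since good reduction is preserved under composition) yields the chain
\begin{equation*}
\dpid(P,Q)=\dpid(\phi^N(P),\phi^N(Q))\geq \dpid(\phi^{N-1}(P),\phi^{N-1}(Q))\geq\cdots\geq \dpid(\phi(P),\phi(Q))\geq \dpid(P,Q).
\end{equation*}
Since the first and last terms coincide, every inequality in the chain must be an equality; in particular $\dpid(\phi(P),\phi(Q))=\dpid(P,Q)$, which is the claim.

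There is no real obstacle here; the only subtlety is the implicit use that $\phi^k$ inherits good reduction from $\phi$, which is a standard consequence of the multiplicativity of the resultant under composition and needs no separate proof. If one prefers to avoid invoking this, the same conclusion follows by applying Proposition~\ref{MS5.2} one step at a time to the pairs $(\phi^{i}(P),\phi^{i}(Q))$ for $i=0,1,\ldots,N-1$, using only the good reduction of $\phi$ itself.
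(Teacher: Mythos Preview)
Your proof is correct and is essentially identical to the paper's own argument: both take $N=\lcm(n,m)$, use Proposition~\ref{MS5.2} iteratively to get a nondecreasing chain from $\dpid(P,Q)$ up to $\dpid(\phi^N(P),\phi^N(Q))=\dpid(P,Q)$, and conclude that all inequalities are equalities. Your closing remark that one only needs good reduction of $\phi$ (applied stepwise) rather than of $\phi^k$ is a nice clarification, though the paper's proof also implicitly uses only this.
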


\begin{proof} 
Let $n$ and $m$ be the minimal periods for $P$ and $Q$ respectively. 
By Proposition \ref{MS5.2} we have 
$$\dpid(P,Q)\leq\dpid(\phi(P),\phi(Q))\leq \dpid(\phi^i(P),\phi^i(Q)),$$
for any positive integer $i$. Therefore it is enough to prove that
there exists a positive integer $i$ with $\phi^i(P)=P$ and
$\phi^i(Q)=Q$. Take $i=\lcm(n,m)$.
% This is clear if either $n$ divides $m$ or $m$ divides $n$. For generic $n,m$, up to taking a suitable iterate, without loss of generality we assume that $m$ and $n$ are coprime. By Chinese reminder theorem there exists an integer $i$ such that $i\cong 1$ (mod $n$) and $i\cong 1$ (mod $m$). Then  $\phi^i(P)=P$ and $\phi^i(Q)=Q$ hold.
\end{proof}

\begin{lemma}\label{lemmapcomp}
Let $K,\pid,\phi$ be as in Lemma \ref{div} and let
$\psi\colon\p_1\to\p_1$ be a rational map defined over $K$ with good reduction at $\pid$. Let $P,Q\in \Proj_1(K)$ be two periodic points for the composed map $\psi\circ\phi$. Then
\beq\label{PQ=}\dpid(P,Q)=\dpid(\phi(P),\phi(Q)).\eeq
\end{lemma}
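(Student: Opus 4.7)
The plan is to mimic the proof of Lemma~\ref{div} but with the composed map $F=\psi\circ\phi$ playing the role of the dynamical system, while still inserting $\phi$ as an intermediate step. Since both $\phi$ and $\psi$ have good reduction at $\pid$, Proposition~\ref{MS5.2} applies to each of them separately, so we can chain the resulting inequalities.

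First I would note that by Proposition~\ref{MS5.2} applied to $\phi$ and then to $\psi$, one has the two-step inequality
\[
\dpid(P,Q)\le \dpid(\phi(P),\phi(Q))\le \dpid(\psi(\phi(P)),\psi(\phi(Q)))=\dpid(F(P),F(Q)),
\]
where $F=\psi\circ\phi$. Iterating this argument $k$ times yields, for every $k\ge 1$,
\[
\dpid(P,Q)\le \dpid(\phi(P),\phi(Q))\le \dpid(F(P),F(Q))\le \dots \le \dpid(F^k(P),F^k(Q)).
\]

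Next, let $n$ and $m$ be the minimal periods of $P$ and $Q$ for $F$, and set $k=\lcm(n,m)$, so that $F^k(P)=P$ and $F^k(Q)=Q$. Then the rightmost term above equals $\dpid(P,Q)$, forcing every intermediate inequality to be an equality. In particular $\dpid(\phi(P),\phi(Q))=\dpid(P,Q)$, which is \eqref{PQ=}.

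The proof is essentially a direct imitation of Lemma~\ref{div}, and there is no real obstacle: the only subtlety is that $\phi$ alone need not preserve the set $\{P,Q\}$, but this does not matter because the squeezing argument only requires that \emph{some} iterate of $F$ brings both points back to themselves, and the intermediate quantity $\dpid(\phi(P),\phi(Q))$ is simply trapped between $\dpid(P,Q)$ and $\dpid(F^k(P),F^k(Q))$.
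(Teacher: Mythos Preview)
Your proof is correct and follows essentially the same approach as the paper: sandwich $\dpid(\phi(P),\phi(Q))$ between $\dpid(P,Q)$ and $\dpid(F(P),F(Q))$ via Proposition~\ref{MS5.2}, then use periodicity under $F=\psi\circ\phi$ to close the chain. The only cosmetic difference is that the paper notes $\psi\circ\phi$ has good reduction and invokes Lemma~\ref{div} directly for the equality $\dpid(F(P),F(Q))=\dpid(P,Q)$, whereas you unfold that lemma's iteration argument by hand.
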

\begin{proof}
The composition $\psi\circ\phi$ has good reduction at $\pid$ (cf.\
Silverman~\cite[Theorem 2.18]{SIL1}). From Lemma \ref{div} and Proposition \ref{MS5.2} we have
$$\dpid(P,Q)\leq \dpid(\phi(P),\phi(Q))\leq \dpid(\psi(\phi(P)),\psi(\phi(Q)))=\dpid(P,Q).$$
Thus (\ref{PQ=}) holds.
\end{proof}

\begin{remark}
In general if $P,Q$ are periodic points for $\psi\circ
\phi$, it does not imply that they are also periodic for
$\phi$. Therefore \eqref{PQ=} does not follow directly from Lemma
\ref{div}. Also, $\psi\circ
\phi$ having good reduction at $\pid$ does not imply that $\phi$ and $\psi$ have good
reduction at $\pid$ (see \cite[Remark~2.19(c)]{SIL1}).
\end{remark} 

Let $A$ be a point in $\p_1(K)$. A point $P\in\p_1(K)$ is said to be \emph{$S$-integral} with respect to $A$ if and only if $\dmfp(P,A)=0$ for all $\pid\notin{S}$. Let $\mathcal{A}$ be a finite set of points in $\p_1(K)$. We say that a point $P\in\p_1(K)$ is $S$-integral with respect to $\mathcal{A}$ if $P$ is integral with respect to $A$ for all $A\in\mathcal{A}$. We also say that $P$ is an \emph{$S$-integer} in $\p_1\setminus\mathcal{A}$.

\begin{lemma}\label{lem:rami_p} Let $K$ be a number field and $S$ a finite set of places of $K$ containing all the archimedean ones. Let $\phi\colon\Proj_1\to\Proj_1$ be a rational map defined over $K$ with good reduction outside $S$.
Let $Q\in \p_1(K)$ be a ramification point of $\phi$ and let
$P\in\p_1(K)$ be a point distinct from $Q$ such that 
$$\dmfp(P,Q)=\dmfp(\phi(P),\phi(Q))$$
for all $\pid\notin S$.  Then $\dpid(P,Q)=0$ for all $\pid\notin S$, i.e., $P$ is an $S$--integer with respect to $Q$.
\end{lemma}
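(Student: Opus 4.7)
The plan is to fix an arbitrary place $\pid\notin S$ and show that $\dmfp(P,Q)=0$ at that place; the conclusion of the lemma follows by varying $\pid$. The guiding intuition is that at a ramified point $Q$, the map $\phi$ locally at least doubles $\pid$-adic distances (in the additive/logarithmic sense), so the hypothesis $\dmfp(P,Q)=\dmfp(\phi(P),\phi(Q))$ must force the distance to vanish. As in the proof of Proposition~\ref{MS5.2}, I would first replace $\phi$ by $g\circ\phi\circ f$ for suitable $f,g\in\PGL_2(R_\pid)$, chosen so that $f([0:1])=Q$ and $g(\phi(Q))=[0:1]$; this conjugation preserves $\dmfp$, good reduction at $\pid$, the chordal equality in the hypothesis, and the ramification of $\phi$ at $Q$, so we may assume $Q=\phi(Q)=[0:1]$.

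Now write $\phi=[F(X,Y):G(X,Y)]$ with $F,G\in R_\pid[X,Y]$ homogeneous of degree $d$ and $\Res(F,G)\in R_\pid^*$. Since $\phi([0:1])=[0:1]$, we have $F(0,1)=0$ while $G(0,1)$ is a $\pid$-unit (otherwise the resultant would vanish mod $\pid$), so $X\mid F$ in $R_\pid[X,Y]$; write $F=X\cdot F_1$ with $F_1$ homogeneous of degree $d-1$. The assumption that $Q=[0:1]$ is ramified for $\phi$ means that $F(X,1)/G(X,1)$ vanishes to order at least $2$ at $X=0$, equivalently $F_1(0,1)=0$, and hence $X\mid F_1$, so that $F(X,Y)=X^2 F_2(X,Y)$ for some $F_2\in R_\pid[X,Y]$. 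Next, choose normalized coordinates $P=[x_P:y_P]$ with $\min(v_\pid(x_P),v_\pid(y_P))=0$. If $v_\pid(x_P)=0$, then $P$ and $Q$ reduce to distinct points modulo $\pid$, so $\dmfp(P,Q)=0$ and we are done; otherwise $v_\pid(y_P)=0$ and $v_\pid(x_P)=n\geq 1$, giving $\dmfp(P,Q)=n$. In this case $G(x_P,y_P)\equiv y_P^d G(0,1)\pmod{\pid}$ is a $\pid$-unit, while $F(x_P,y_P)=x_P^2 F_2(x_P,y_P)$ has valuation at least $2n$, so applying the definition of $\dmfp$ to $\phi(P)=[F(x_P,y_P):G(x_P,y_P)]$ and $\phi(Q)=[0:1]$ yields $\dmfp(\phi(P),\phi(Q))\geq 2n$. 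The hypothesis then forces $n\geq 2n$, hence $n=0$, contradicting $n\geq 1$.

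The only genuinely subtle point is the normalization step: one must check that conjugating by $\PGL_2(R_\pid)$ simultaneously preserves the chordal equality, good reduction at $\pid$, and ramification of $\phi$ at $Q$. Transitivity of $\PGL_2(R_\pid)$ on $\p_1(K)$ (available because any point has a primitive representative in $R_\pid^2$) makes the reduction possible, and all three invariance properties are standard and essentially implicit in the proof of Proposition~\ref{MS5.2}. Once past this bookkeeping, the rest is the short direct calculation above exploiting the factor $X^2$ forced by ramification.
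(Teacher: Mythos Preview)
Your proof is correct and follows essentially the same approach as the paper's: fix $\pid\notin S$, use $\PGL_2(R_\pid)$-invariance of $\dmfp$ and of good reduction to reduce to the case $Q=\phi(Q)=[0:1]$, exploit ramification to write $\phi=[X^2F_2:G]$ with $G(0,1)$ a $\pid$-unit, and then compute directly that $\dmfp(\phi(P),\phi(Q))\geq 2\dmfp(P,Q)$ to force $\dmfp(P,Q)=0$. Your write-up is in fact slightly more explicit than the paper's in justifying the $X^2$ factor and the $\pid$-unit property of $G(x_P,y_P)$.
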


\begin{proof} 
{Let $\pid\notin S$. For all $f\in \PGL_2(R_\pid)$
$$\dmfp(P_1,P_2)=\dmfp(f(P_1),f(P_2))$$
holds for all $P_1,P_2\in\p_1(K)$} (i.e., the $\pid$-adic
distance is preserved under automorphisms of $\p_1$ with good reduction at $\pid$). 
For any map $f\in \PGL_2(R_\pid)$ let $B=f^{-1}(Q)$ and let
$A=f^{-1}(P)$ and choose an automorphism $g\in\PGL_2(R_\pid)$ such that $g(\phi(f(B)))=B$, then
\begin{align*} 
\dmfp(A,B) &=\dmfp(f(A),f(B))\\
&=\dmfp(P,Q)\\
&=\dmfp(\phi(P),\phi(Q))\\
&=\dmfp(\phi(f(A)),\phi(f(B)))\\
&=\dmfp(g(\phi(f(A))),B).
\end{align*}
Hence by taking $B=0$ and $g\circ\phi\circ f$ instead of $\phi$ we can
assume that $Q=[0:1]$ and $Q$ is fixed by $\phi$ (note that
$g\circ\phi\circ f$ has good reduction at $\pid$ and $B$ is ramified
for $g\circ\phi\circ f$). Since $Q=[0:1]$ is ramified for $\phi$ and
$\phi$ has good reduction at $\pid$, there exist two homogeneous
polynomials $r,s\in R_\pid[X,Y]$ such that 
\begin{equation*}
\phi([X:Y])=[X^2r(X,Y):s(X,Y)], 
\end{equation*}
where $s(0,1)$ is a $\pid$-unit. 

Suppose $n$ is a positive integer such that $\dmfp(P,Q)=n$. Therefore
there exist $x_P,y_p\in R_\pid$ such that $P=[x_P:y_P]$ with
$v_\pid(x_P)=n$ and $v_\pid(y_P)=0$. By the good reduction of $\phi$
at $\pid$ we have that the two polynomials $X^2r(X,Y)$ and $s(X,Y)$
have no common factors modulo $\pid$ so that $v_\pid(s(x_P,y_P))$ must
be $0$, and therefore $s(x_P,y_P)$ is a $\pid$-unit. We get
$$n=\dmfp(P,Q)=\dmfp(\phi(P),\phi(Q)) = \dmfp(\phi(P),Q) \geq 2n.$$
Contradiction. Therefore $\dpid(P,Q)=n=0$ as required.
\end{proof}

% \begin{remark}
% Since a rational map of degree $d$ has $2d-2$ critical points, counted
% with multiplicity, the requirements of the lemma can only be satisfied
% for maps of degree $d\geq{2}$.
% \end{remark}

%\begin{cor}\label{cor:rami_periodic} Let $K,\pid,\phi$ be as in Lemma \ref{div}. 
%Let $P,Q\in \p_1(K)$ be distinct periodic points of $\phi$ such that
%$Q$ is ramified. Then $\dpid(P,Q)=0$. 
%\end{cor}
%
%\begin{proof}
%Immediate from Lemma~\ref{div} and Lemma~\ref{lem:rami_p}.
%\end{proof}

\begin{cor}\label{cor:rami_cycle}
Let $K,S,\phi$ be as in Lemma \ref{lem:rami_p}. 
Let $P,Q\in \p_1(K)$ be distinct periodic points of $\phi$ such that
the orbit of $Q$ contains a ramified point. Then $\dpid(P,Q)=0$ for all $\pid\notin S$, i.e., $P$ is an $S$-integer with respect to $Q$. 
\end{cor}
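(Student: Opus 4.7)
The plan is to reduce to Lemma~\ref{lem:rami_p} by iterating $\phi$ until the orbit of $Q$ lands on a ramified point, using the equality of $\pid$-adic distance under $\phi$ between periodic points (Lemma~\ref{div}) to transport the desired equality.

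First, I would fix an integer $k\geq 0$ such that $R:=\phi^k(Q)$ is a ramified point of $\phi$; such a $k$ exists by hypothesis. Note that $R$ itself is periodic, since $Q$ is periodic and $R$ lies in its cycle. By iterating Lemma~\ref{div} applied to the pair of periodic points $(P,Q)$, for every $\pid\notin S$ we obtain
\begin{equation*}
\dpid(P,Q)=\dpid(\phi^k(P),\phi^k(Q))=\dpid(\phi^k(P),R).
\end{equation*}
Hence it suffices to prove $\dpid(\phi^k(P),R)=0$ for every $\pid\notin S$.

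Next, I would verify that $\phi^k(P)\neq R$, so that Lemma~\ref{lem:rami_p} is applicable. Indeed, the restriction of $\phi$ to the (finite) set $\mathrm{Per}(\phi,K)$ is a bijection, so $\phi^k$ is injective on periodic points; if $\phi^k(P)=R=\phi^k(Q)$ then $P=Q$, contradicting our assumption. Moreover, both $\phi^k(P)$ and $R$ are periodic for $\phi$, so Lemma~\ref{div} applied to the pair $(\phi^k(P),R)$ yields
\begin{equation*}
\dpid(\phi^k(P),R)=\dpid(\phi(\phi^k(P)),\phi(R))\qquad\text{for all }\pid\notin S,
\end{equation*}
which is exactly the hypothesis of Lemma~\ref{lem:rami_p} (with $\phi^k(P)$ playing the role of $P$ and the ramified point $R$ playing the role of $Q$). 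Applying that lemma we conclude $\dpid(\phi^k(P),R)=0$ for all $\pid\notin S$, and combining with the displayed equality above gives $\dpid(P,Q)=0$, as desired.

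This argument is essentially routine once the ingredients are lined up; the only subtle point is the injectivity of $\phi$ on periodic points used to ensure $\phi^k(P)\neq R$, so that we may legitimately invoke Lemma~\ref{lem:rami_p}, which is stated for distinct points.
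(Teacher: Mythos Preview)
Your proof is correct and follows essentially the same approach as the paper: iterate to the ramified point via Lemma~\ref{div}, then invoke Lemma~\ref{lem:rami_p}. The only cosmetic difference is that the paper applies Lemma~\ref{lem:rami_p} with the map $\phi^k$ (using that $\phi^k(Q)$, being ramified for $\phi$, is then also ramified for $\phi^k$ when $k\geq 1$), whereas you apply it with $\phi$ itself; your version is arguably cleaner, and your explicit justification that $\phi^k(P)\neq\phi^k(Q)$ via injectivity of $\phi$ on periodic points fills in a detail the paper leaves implicit.
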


\begin{proof}
Let $\pid\notin S$.  Let $k\geq{0}$ be such that $\phi^k(Q)$ is a ramified (periodic) point.
Then $\phi^k(P)$ and $\phi^k(Q)$ are distinct and satisfy the conditions of Lemma \ref{lem:rami_p} with $\phi^k$ instead of $\phi$.
Therefore $\dpid(\phi^k(P),\phi^k(Q))=0$. By Lemma~\ref{div} we get 
$$\dpid(P,Q) = \dpid(\phi^k(P),\phi^k(Q))=0.$$
\end{proof}

%\begin{cor}\label{cor:rami_S} Let $K$ be a number field and $S$ a finite set
%  of places of $K$ containing all the archimedean ones. Let
%  $\phi\colon\Proj_1\to\Proj_1$ be a rational map with good reduction
%  outside $S$. Let $Q\in \p_1(K)$ be a ramification point of $\phi$ and let
%$P\in\p_1(K)$ be a point distinct from $Q$ such that for any $\pid\notin{S}$
%$$\dmfp(P,Q)=\dmfp(\phi(P),\phi(Q)).$$ 
%Then $P$ is an $S$-integer with respect to $Q$. 
%\end{cor}

%\begin{proof}
%Apply Lemma~\ref{lem:rami_p} to all $\pid\notin{S}$.
%\end{proof}
%
%\begin{cor}
%Let $K,S,\phi$ be as in Corollary \ref{cor:rami_S}. 
%Let $P,Q\in \p_1(K)$ be distinct periodic points of $\phi$ such that
%the orbit of $Q$ contains a ramified point. Then $P$ is an $S$-integer with respect to $Q$. 
%\end{cor}
%
%\begin{proof}
%Apply Corollary~\ref{cor:rami_cycle} to all $\pid\notin{S}$.
%\end{proof}

\begin{lemma}\label{lem:tail_dist} Let $K,S,\phi$ be as in Lemma \ref{lem:rami_p}. 
 Let $P,Q\in
  \p_1(K)$ be distinct points such $\phi(P)=\phi(Q)$. Let $R\in\p_1(K)$ be a point such that $R\neq P$ and
\beq\label{Scond} \dmfp(P,R)=\dmfp(\phi(P),\phi(R))\eeq
for all $\pid\notin S$. Then $\dpid(Q,R)=0$ for all $\pid\notin S$, i.e., $R$ is an $S$--integer with respect to $Q$. 
\end{lemma}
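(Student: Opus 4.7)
The plan is to argue locally at each place $\pid\notin S$, following the strategy of Lemma~\ref{lem:rami_p}. Fix $\pid\notin S$ and suppose for contradiction that $\dpid(Q,R)=n>0$. First I would use automorphisms in $\PGL_2(R_\pid)$ (which preserve both the $\pid$-adic chordal valuation and good reduction) to send $Q$ to $[0:1]$ and post-compose with another such automorphism so that $\phi(Q)=[0:1]$ as well, turning $Q$ into a fixed point of the normalized map. Since $\phi(P)=\phi(Q)$, automatically $\phi(P)=[0:1]$. Good reduction together with $\phi([0:1])=[0:1]$ lets us write $\phi([X:Y])=[X\cdot r(X,Y):s(X,Y)]$, with $r,s\in R_\pid[X,Y]$ homogeneous, $r$ of degree $d-1$, and $s(0,1)$ a $\pid$-unit (since $X\cdot r$ and $s$ must be coprime modulo $\pid$). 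Choosing projective representatives $P=[x_P:y_P]$ and $R=[x_R:y_R]$ with coordinates in $R_\pid$ and at least one coordinate a unit in each case, the condition $\phi(P)=[0:1]$ together with $P\neq Q$ forces $r(x_P,y_P)=0$.

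Next I would split into two cases depending on the reduction of $P$. If $P\not\equiv Q\pmod\pid$, i.e., $\dpid(P,Q)=0$, then the ultrametric property of the $\pid$-adic chordal valuation (applied with the two unequal values $0$ and $n$) gives $\dpid(P,R)=0$, while Proposition~\ref{MS5.2} applied to $\phi(Q)=\phi(P)$ yields
\[
\dpid(\phi(P),\phi(R))=\dpid(\phi(Q),\phi(R))\geq \dpid(Q,R)=n>0,
\]
contradicting the hypothesis $\dpid(P,R)=\dpid(\phi(P),\phi(R))$.

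If instead $P\equiv Q\pmod\pid$, then both $P$ and $R$ reduce to $[0:1]$, so $y_P,y_R$ are $\pid$-units, $v_\pid(x_R)=n$, and $s(x_P,y_P)$, $s(x_R,y_R)$ are $\pid$-units. A direct computation from the normal form gives $\dpid(\phi(P),\phi(R))=n+v_\pid(r(x_R,y_R))$, while $\dpid(P,R)=v_\pid(x_Py_R-x_Ry_P)$. Exploiting that $r$ is homogeneous of degree $d-1$ and that $r(x_P,y_P)=0$, I would compare $y_P^{d-1}r(x_R,y_R)$ with $y_R^{d-1}r(x_P,y_P)=0$ after the substitution $(u,w)=(x_Ry_P,y_Py_R)$ and $(v,w)=(x_Py_R,y_Py_R)$, and use that $u-v=x_Ry_P-x_Py_R$ divides every $u^i-v^i$, to conclude that $(x_Py_R-x_Ry_P)\mid y_P^{d-1}r(x_R,y_R)$ in $R_\pid$. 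Since $y_P$ is a unit, this gives $v_\pid(r(x_R,y_R))\geq v_\pid(x_Py_R-x_Ry_P)$, and substituting into the hypothesis forces $0\geq n$, a contradiction.

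The main technical obstacle is producing the divisibility $(x_Py_R-x_Ry_P)\mid y_P^{d-1}r(x_R,y_R)$ in the second case; everything else is bookkeeping with the local normal form of $\phi$ at a fixed point and an application of the non-archimedean triangle inequality. Since the conclusion $\dpid(Q,R)=0$ holds at every $\pid\notin S$, the point $R$ is $S$-integral with respect to $Q$.
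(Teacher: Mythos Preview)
Your proof is correct and follows essentially the same local strategy as the paper: fix $\pid\notin S$, assume $\dpid(Q,R)>0$, normalize so that $Q=[0:1]$ is fixed by $\phi$, and contradict the hypothesis $\dpid(P,R)=\dpid(\phi(P),\phi(R))$.

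The organization differs slightly. The paper avoids your case split: from $\dpid(Q,R)>0$ it first deduces $\dpid(P,R)=\dpid(\phi(Q),\phi(R))>0$ via Proposition~\ref{MS5.2}, and then the triangle inequality forces $\dpid(P,Q)>0$, so your Case~1 never arises. At that point the paper simply observes that the common reduction of $P$ and $Q$ is a ramified point of the reduced map $\phi_\pid$ and refers back to the argument of Lemma~\ref{lem:rami_p} for the contradiction. Your Case~2 makes this deferred step fully explicit, but via a different algebraic device: rather than using that $[0:1]$ is ramified for $\phi_\pid$ (which would yield $v_\pid(r(x_R,y_R))\geq 1$ and hence $\dpid(\phi(P),\phi(R))\geq \dpid(P,R)+1$), you normalize at $Q$ and exploit the \emph{exact} vanishing $r(x_P,y_P)=0$ over $K$ to obtain the divisibility $(x_Py_R-x_Ry_P)\mid y_P^{d-1}r(x_R,y_R)$, giving the sharper inequality $\dpid(\phi(P),\phi(R))\geq n+\dpid(P,R)$. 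Either inequality suffices for the contradiction; your route is more self-contained, the paper's is terser.
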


\begin{proof}Let $\pid\notin S$. 
Suppose that $\dmfp(Q,R)>0$. Since $\phi$ has good reduction at $\pid$
we have that $\dmfp(\phi(Q),\phi(R))>0$ (by Proposition~\ref{MS5.2}). Hence 
$$ \dmfp(P,R) = \dmfp(\phi(P),\phi(R)) = \dmfp(\phi(Q),\phi(R)) > 0.$$
By the triangle inequality proved in \cite[Proposition 5.1]{MS} we have
$$\dmfp(P,Q)\geq \min\{\dmfp(P,R),\dmfp(Q,R)\}>0.$$
Therefore the reduction of $P$ modulo $\pid$ is a
ramified point of the reduced map $\phi_\pid$. With a proof similar to
that of Lemma \ref{lem:rami_p} we obtain 
$\dmfp(P,R)=0$, contradicting our assumption $\dmfp(Q,R)>0$. 
\end{proof}

\begin{remark}
As for Lemma~\ref{lem:rami_p}, the requirements of Lemma~\ref{lem:tail_dist} can only be satisfied by a
map of degree $d\geq{2}$.
\end{remark}

%\begin{cor} \label{cor:tail_periodic}
%Let $K,\pid,\phi$ be as in Lemma \ref{div}. Let $P\in\p_1(K)$ be a
%periodic point, and let $Q\in\p_1(K)$ such that $P\neq{Q}$ and
%$\phi(P)=\phi(Q)$ (i.e., $Q$ is a tail point of tail length $1$ for
%$\phi(P)$). If $R$ is any periodic point distinct from $P$ then $\dpid(Q,R)=0$.
%\end{cor}
%
%\begin{proof}
%Immediate from Lemma~\ref{div} and Lemma~\ref{lem:tail_dist}.
%\end{proof}

%\begin{cor}\label{cor:2points} Let $K,S,\phi$ be as in Corollary
%  \ref{cor:rami_S}.  Let $P,Q\in
%  \p_1(K)$ be distinct points such $\phi(P)=\phi(Q)$. 
%Let $R\in\p_1(K)$ be a point such that for any $\pid\notin{S}$
%\beq\label{Scond} \dmfp(P,R)=\dmfp(\phi(P),\phi(R)).\eeq
%then $R$ is an $S$-integer with respect $Q$. 
%\end{cor}
%
%\begin{proof}
%Apply Lemma~\ref{lem:tail_dist} to all $\pid\notin{S}$.
%\end{proof}

\begin{cor}\label{cor:2points} Let $K,S,\phi$ be as in Lemma \ref{lem:rami_p}. 
Let $P\in\p_1(K)$ be a
periodic point, and let $Q\in\p_1(K)$ such that $P\neq{Q}$ and
$\phi(P)=\phi(Q)$ (i.e., $Q$ is a tail point of length $1$ for
$\phi(P)$). If $R$ is any periodic point distinct from $P$ then $R$ is an $S$-integer with respect to $Q$. 
\end{cor}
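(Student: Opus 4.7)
The plan is to reduce the corollary directly to the \hyperref[lem:tail_dist]{Tail Distance Lemma} (Lemma~\ref{lem:tail_dist}) by using Lemma~\ref{div} to verify its hypothesis.

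First I would observe that, by assumption, $P$ and $R$ are both periodic points for $\phi$, and $\phi$ has good reduction outside $S$. Therefore Lemma~\ref{div} applies to the pair $(P,R)$ and yields
\begin{equation*}
\dpid(P,R)=\dpid(\phi(P),\phi(R))\qquad\text{for all } \pid\notin S.
\end{equation*}
This is exactly the hypothesis \eqref{Scond} of Lemma~\ref{lem:tail_dist}, where the point called $R$ in that lemma is our periodic point $R$. The remaining hypotheses of Lemma~\ref{lem:tail_dist} are built into our setup: the pair $(P,Q)$ consists of two distinct points with $\phi(P)=\phi(Q)$, and $R\neq P$ since $R$ is a periodic point distinct from $P$.

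Applying Lemma~\ref{lem:tail_dist} then gives $\dpid(Q,R)=0$ for every $\pid\notin S$, which is precisely the statement that $R$ is an $S$-integer with respect to $Q$. There is no real obstacle here, as the corollary is a clean packaging of two of the earlier lemmas: Lemma~\ref{div} supplies the distance-preservation property that periodic points automatically enjoy, and Lemma~\ref{lem:tail_dist} then converts this into the $S$-integrality of $R$ with respect to the \emph{other} preimage $Q$ of $\phi(P)$.
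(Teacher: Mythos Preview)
Your proof is correct and follows exactly the same approach as the paper, which simply says ``Apply Lemma~\ref{div} and Lemma~\ref{lem:tail_dist}.'' You have merely spelled out the details of how these two lemmas combine.
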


\begin{proof}
Apply Lemma \ref{div} and Lemma \ref{lem:tail_dist}.
\end{proof}

\section{$S$-coprime integral coordinates}\label{scc}
Let $P\in \p_1(K)$ where $K$ is an arbitrary number field. Let $S$ be
a finite set of places containing all the archimedean ones.
There exist $a,b\in R_S$ such that $P=[a:b]$. We say that the
homogeneous coordinates $[a:b]$ are
\emph{$S$-coprime integral coordinates} for $P$ if $\min\{v_\pid(a), v_\pid(b)\}=0$
for each $\pid\notin S$.  

The main property of $S$-coprime integral coordinates that we will need is the following.
\begin{lemma} \label{lemma:s-coprime}
Let $P_1,P_2\in\p_1(K)$ be points that can be written in $S$-coprime
integral coordinates $P_1 = [x_1:y_1]$ and $P_2 = [x_2 : y_2]$. Then
for any $\mathfrak{p}\notin{S}$
$$ \dpid(P_1,P_2) = \nu_\mathfrak{p} (x_1y_2-x_2y_1).$$
\end{lemma}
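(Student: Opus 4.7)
The plan is to observe that this lemma is an immediate consequence of unwinding the definitions, so the proof will simply substitute the defining condition of $S$-coprime integral coordinates into the formula defining the $\mathfrak{p}$-adic chordal valuation.

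Concretely, I would proceed as follows. Fix any finite place $\mathfrak{p} \notin S$. By definition of $S$-coprime integral coordinates, we have $x_1, y_1, x_2, y_2 \in R_S$ and
\[
\min\{\nu_\mathfrak{p}(x_1), \nu_\mathfrak{p}(y_1)\} = 0, \qquad \min\{\nu_\mathfrak{p}(x_2), \nu_\mathfrak{p}(y_2)\} = 0.
\]
Plugging these two vanishing minima into the definition
\[
\delta_\mathfrak{p}(P_1,P_2) = \nu_\mathfrak{p}(x_1 y_2 - x_2 y_1) - \min\{\nu_\mathfrak{p}(x_1), \nu_\mathfrak{p}(y_1)\} - \min\{\nu_\mathfrak{p}(x_2), \nu_\mathfrak{p}(y_2)\}
\]
recalled at the start of the section yields $\delta_\mathfrak{p}(P_1,P_2) = \nu_\mathfrak{p}(x_1 y_2 - x_2 y_1)$, as claimed.

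The only thing worth checking, for the reader's peace of mind, is that the formula for $\delta_\mathfrak{p}$ is independent of the choice of homogeneous coordinates representing $P_1$ and $P_2$, so that applying it to the chosen $S$-coprime integral representatives is legitimate; this is a standard verification using the fact that scaling $(x_i,y_i)$ by $\lambda \in K^*$ shifts both $\nu_\mathfrak{p}(x_i y_{3-i} - x_{3-i} y_i)$ and $\min\{\nu_\mathfrak{p}(x_i),\nu_\mathfrak{p}(y_i)\}$ by $\nu_\mathfrak{p}(\lambda)$, which then cancel in the definition. There is no real obstacle here; the statement exists mainly to record, once and for all, that when one works with $S$-coprime integral coordinates the $\mathfrak{p}$-adic chordal valuation outside $S$ reduces to the single valuation $\nu_\mathfrak{p}(x_1 y_2 - x_2 y_1)$, a fact that will be used repeatedly to translate chordal distance conditions into divisibility statements in $R_S$ (and ultimately into $S$-unit equations).
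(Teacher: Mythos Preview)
Your proof is correct and follows exactly the same approach as the paper, which simply states that the lemma ``is clear from the definitions of $S$-coprime integral coordinates and the $\mathfrak{p}$-adic chordal valuation.'' You have merely written out this one-line justification in slightly more detail, together with a harmless sanity check on well-definedness.
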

\begin{proof}
This is clear from the definitions of $S$-coprime integral coordinates and the $\mathfrak{p}$-adic chordal valuation.
\end{proof}

Unfortunately, if the ring $R_S$ is not a principal ideal domain, there exist points in $\p_1(K)$ that do not have $S$-coprime integral coordinates. 
We could avoid this problem  by taking  a larger set $\mathbb{S}$ of places of $K$ containing $S$, such 
that the ring $R_{\mathbb{S}}$ is a principal ideal domain.  Indeed,
the class number of $R_S$ (i.e., the order of the fractional ideal
class group) is finite (e.g., see Corollary in Chapter 5 in
\cite{M.1}). We denote by $h$ the class number of $R_S$. By a simple
inductive argument, we can choose $\mathbb{S}$ such that
$|\mathbb{S}|\leq s+h-1$. By working with $\mathbb{S}$, we will obtain
bounds depending also on $h$. Instead, we use the same argument as in~\cite{C} (also in
\cite{CP}) in order to avoid the presence of $h$ in
our bounds. 

Let $H$ be a subgroup of an abelian group $G$, then the set 
$$\sqrt{H} =  \{g\in{G} | g^m\in{H} \text{ for some positive integer } m\}$$ 
is a subgroup called the \emph{radical
  closure} of $H$ in $G$ (cf.\ \cite[17.3.1]{KM}).
Let $L$ be a finite extension of $K$, let $S_{L}$ be the set
of places of $L$ lying over the places in $S$ and let $R_{S_{L}}$ and 
$R_{S_{L}}^*$ be the ring of $S_{L}$-integers and the group of
$S_{L}$-units in $L$, respectively. Now $\sqrt{R_S^\ast}$, the
radical closure of 
$R_S^\ast$ in $L^*$, has the property that  $\sqrt{R_{S}^\ast}=R_{S_{L}}^*\cap
 {\sqrt{K}}^*$, where $\sqrt{K^*}$ is the radical closure of $K^*$ in $L^*$. Moreover, $\sqrt{R_{S}^\ast}$ is a subgroup of $L^\ast$ of rank $|S|-1$. 

It is possible to choose an extension $L/K$ such that $R_{S_{L}}$ is a principal ideal domain.
This is again a consequence of the finiteness of the class group of $K$, and the construction of the minimal extension $L$ with the required property is described in detail in \cite{C} (after the statement of Theorem B in that article). 
Any point $P\in\mathbb{P}_1(L)$ can thus be written in $S_{L}$-coprime integral coordinates (by $S_{L}$-coprime integral coordinates we mean $S$-coprime integral coordinates with $S=S_{L}$). 
Furthermore, the coordinates can be normalized in such a way that given any two points $P_1=[x_1:y_1]$ and $P_2=[x_2:y_2]$ (written in these normalized coordinates), we have 
$$x_1y_2-x_2y_1\in\sqrt{K^*}.$$ This normalization is described in the proof of Theorem 1 in \cite{C}. We call such a choice of coordinates \emph{$S$-radical coprime integral coordinates} (this type of coordinates was named \emph{$\sqrt{R_S}$-coprime integral coordinates} in \cite{CP}). 
%The reason for this terminology becomes apparent from the following lemma.

\begin{lemma} \label{lemma:4-points}
Let $P_1,P_2,P_3,P_4\in{\p_1(K)}$, and let $L$ and $S_{L}$ be as discussed above. Consider $P_1,P_2,P_3,P_4$ as points in $\p_1(L)$. Then
\begin{equation} \label{lemma-hat-S:eq1}
\delta_{\hat{\mathfrak{p}}}(P_1,P_2) = \delta_{\hat{\mathfrak{p}}}(P_3,P_4)
\end{equation}
for each $\hat{\mathfrak{p}}\notin{S_{L}}$ if and only if 
there exist $S$-radical coprime integral coordinates 
$$P_1 = [x_1:y_1], P_2=[x_2:y_2], P_3=[x_3:y_3], P_4=[x_4:y_4]$$
such that
\begin{equation} \label{lemma-hat-S:eq2}
x_1y_2-x_2y_1 = u(x_3y_4-x_4y_3),
\end{equation}
for some $u\in\sqrt{R_{S}^\ast}$.
\end{lemma}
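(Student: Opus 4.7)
The plan is to prove the two implications separately; the forward direction (``if'' from the normal form to the chordal equality) is the routine one, while the reverse direction requires careful bookkeeping with the ring $\sqrt{R_S^\ast}$.

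For the direction starting from \eqref{lemma-hat-S:eq2}, I would argue as follows. Since the coordinates are $S$-radical coprime integral, they are in particular $S_L$-coprime integral, so Lemma~\ref{lemma:s-coprime} gives, for each $\hat{\mathfrak p}\notin S_L$,
\[
\delta_{\hat{\mathfrak p}}(P_1,P_2)=\nu_{\hat{\mathfrak p}}(x_1y_2-x_2y_1), \qquad \delta_{\hat{\mathfrak p}}(P_3,P_4)=\nu_{\hat{\mathfrak p}}(x_3y_4-x_4y_3).
\]
Applying $\nu_{\hat{\mathfrak p}}$ to \eqref{lemma-hat-S:eq2} and using that $u\in\sqrt{R_S^\ast}\subseteq R_{S_L}^\ast$ has $\nu_{\hat{\mathfrak p}}(u)=0$ outside $S_L$, the two valuations match, proving \eqref{lemma-hat-S:eq1}.

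For the reverse direction, I would start by choosing $S$-radical coprime integral coordinates for all four points; this is allowed precisely because, after passing to $L$, the ring $R_{S_L}$ is a PID, and the normalization procedure from the proof of Theorem~1 in \cite{C} guarantees that every cross-difference $x_iy_j-x_jy_i$ lies in $\sqrt{K^\ast}$. Then I would set
\[
u=\frac{x_1y_2-x_2y_1}{x_3y_4-x_4y_3}
\]
and show separately that $u\in R_{S_L}^\ast$ and $u\in\sqrt{K^\ast}$. The first membership follows from the hypothesis \eqref{lemma-hat-S:eq1} together with Lemma~\ref{lemma:s-coprime}: for every $\hat{\mathfrak p}\notin S_L$, the numerator and denominator have the same $\hat{\mathfrak p}$-adic valuation, so $\nu_{\hat{\mathfrak p}}(u)=0$. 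The second membership is immediate from the normalization, since $\sqrt{K^\ast}$ is a subgroup of $L^\ast$ and both numerator and denominator lie in it.

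The conclusion then uses the identity $\sqrt{R_S^\ast}=R_{S_L}^\ast\cap \sqrt{K^\ast}$ recalled in the paragraph preceding the lemma: combining the two memberships gives $u\in\sqrt{R_S^\ast}$, which is exactly \eqref{lemma-hat-S:eq2}. The main (minor) obstacle here is really a bookkeeping one, namely remembering that the point of passing from $S$ to $S_L$ and then carrying along the subgroup $\sqrt{R_S^\ast}$ is precisely to combine a PID property (needed to get coprime integral coordinates) with a rationality-over-$K$ property (needed to keep $u$ inside $\sqrt{K^\ast}$) without letting the class number $h$ of $R_S$ enter the final bounds.
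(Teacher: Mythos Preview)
Your proposal is correct and follows essentially the same approach as the paper: choose $S$-radical coprime integral coordinates so that the cross-differences lie in $\sqrt{K^\ast}$, use Lemma~\ref{lemma:s-coprime} together with \eqref{lemma-hat-S:eq1} to see that the ratio $u$ lies in $R_{S_L}^\ast$, and then conclude via $\sqrt{R_S^\ast}=R_{S_L}^\ast\cap\sqrt{K^\ast}$. The paper simply calls the direction from \eqref{lemma-hat-S:eq2} to \eqref{lemma-hat-S:eq1} ``trivial'' where you spell it out, but the substance is identical.
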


\begin{proof}
Since $P_1,P_2,P_3,P_4\in{\p_1(K)}$ we can choose $S$-radical coprime integral coordinates such that $ x_1y_2-x_2y_1, x_3y_4-x_4y_3 \in \sqrt{K^*} $. At the same time, it is clear from \eqref{lemma-hat-S:eq1} and Lemma~\ref{lemma:s-coprime} that there exists a $u\in{R_{S_{L}}^*}$ satisfying \eqref{lemma-hat-S:eq2}. Therefore $u\in{R_{S_{L}}^*\cap{\sqrt{K^*}}} = \sqrt{R_S^*}$. The other implication is trivial.
\end{proof}

\begin{remark} \label{mainrem}
% In the proof of the Main~Theorem we implicitly assume that for the number
% field $K$ and the finite set of place $S$ it is possible to write all points in $\p_1(K)$ with
% $S$-coprime integral coordinates. Actually what we mean is that we work with $L$ and $S_{L}$ instead of
% $K$ and $S$, and use
% $\srs$-coprime integral coordinates for the points in $\p_1(K)$. Also, whenever
% we write $R_S^*$ we mean $\sqrt{R_S^*}$, especially when 
% applying the bounds for equation \eqref{nvars-unit-equation} where
% $\sqrt{R_S^*}$ plays the role of the subgroup $\Gamma$.
In the proof of Theorem~\ref{MainT}, the finitely generated group $\sqrt{R_S^*}$ takes the place of the group $\Gamma$ in the unit equation \eqref{nvars-unit-equation}.
Since this is not necessarily the group of $S$-units of a number field, we will not be able to use Evertse's bounds for $S$-unit equations (described in the introduction). 
\end{remark}

\section{Proof of the Four-Point Lemma} 

\begin{proof}[Proof of the Four-Point Lemma]
Let $K,S$ and $\phi$ be as in the statement of the lemma (see also Remark \ref{mainrem}), and let $L$ be the extension of $K$ and $S_{L}$ the set of places as described in Section \ref{scc}. 
Let 
$$A=[a_1:b_1], C=[c_1:d_1], E=[e_1:f_1], G=[g_1:h_1]$$
and 
$$\phi(A)=[a_2:b_2], \phi(C)=[c_2:d_2], \phi(E)=[e_2:f_2], \phi(G)=[g_2:h_2]$$
be written in $S$-radical coprime integral coordinates. 

We can assume that none of the images
$\phi(A),\phi(C),\phi(E),\phi(G)$ is the point $[0:1]$, as
otherwise we can conjugate $\phi$ by a suitable automorphism
$\alpha\in{PGL_2(R_S)}$ and move these points away from $[0:1]$.
We replace $\phi$ by the conjugate map $\alpha\circ\phi\circ
\alpha^{-1}$ and the points $A,C,E$ and
$G$ by the points
$\alpha(A),\alpha(C),\alpha(E)$ and $\alpha(G)$, respectively. 
This does not cause problems since $\alpha\circ\phi\circ\alpha^{-1}$
still has good reduction outside $S$, the degree of the map does not
change and the $\pid$-adic chordal valuation remains invariant under the change in coordinates{, via an automorphism with good reduction at $\pid$}.

Let $P\in\mathcal{P}$ be a general point 
and let $[x_1:y_1]$ and $[x_2:y_2]$ be $S$-radical coprime integral coordinates for $P$
and $\phi(P)$, respectively (notice that we do not rule out the
possibility that $P$ is equal to $\phi(P)$ or to any of the points
$A,C,E$ and $G$). We assume in what follows that $P\neq [1:0]$ (i.e.,
$[1:0]\notin\mathcal{P}$), and then add $1$ to the bound that we obtain with $P\neq [1:0]$. 

%The map $\phi$ has good reduction at each place outside $S$; thus by Lemma \ref{div} we have
%$$\dpid(A,P)=\dpid(\phi(A),\phi(P)), \dpid(C,P)=\dpid(\phi(C),\phi(P)),$$
%$$\dpid(E,P)=\dpid(\phi(E),\phi(P)), \dpid(G,P)=\dpid(\phi(G),\phi(P))$$  
%for all $\pid\notin S$. s z

By Lemma \ref{lemma:4-points}, the four equalities in \eqref{main-lemma-eq} are satisfied if \normalcolor
and only if there exist four units $u_i\in \sqrt{R_S^*}, i\in\{1,2,3,4\}$ such that the
following system of equations is satisfied
\beq\label{sys} 
\begin{cases}
a_1y_1-b_1x_1=u_1(a_2y_2-b_2x_2)\\
c_1y_1-d_1x_1=u_2(c_2y_2-d_2x_2)\\
e_1y_1-f_1x_1=u_3(e_2y_2-f_2x_2)\\
g_1y_1-h_1x_1=u_4(g_2y_2-h_2x_2)
\end{cases}
\eeq
Note that in the system in \eqref{sys} we may assume that one of the
$u_i$ is 1, for example $u_4=1$. 
In fact, it is enough to take $u_4x_1, u_4y_1$ instead of $x_1, y_1$, respectively, and replace $u_i$ by $u_i/u_4$ for $i\in\{1,2,3\}$ (note that $[u_4x_1 : u_4y_1]$ are still $S$-radical coprime integral coordinates for $P$). 

By assumption, the system in \eqref{sys} has a nonzero solution $(x_1,y_1,x_2,y_2)\neq (0,0,0,0)$. Therefore the rank of the matrix
$$M=\begin{pmatrix}
 {a_1} & {b_1} &-{a_2} {u_1} & {b_2} {u_1} \\
 {c_1} & {d_1} &-{c_2} {u_2} & {d_2} {u_2} \\
 {e_1} & {f_1} &-{e_2} {u_3} & {f_2} {u_3} \\
 {g_1} & {h_1} &-{g_2} & {h_2} \\
\end{pmatrix}
$$
associated to the system in \eqref{sys} has to be less than $4$. By
considering the $2\times 2$ minors in the first two columns one sees
that each pair of rows of the matrix $M$ is linearly independent, 
since the points $A,C,E$ and $G$ are distinct. This implies
that the rank of $M$ is at least $2$.
  		
%$$\left|\begin{matrix}a_1&-b_1\\c_1&-d_1\end{matrix}\right|=-a_1d_1+b_1c_1\neq 0$$
%we see that rank$(M)\geq 2$. In particular the first two lines are linearly independent. 

\begin{description}[leftmargin=0cm, labelindent=0cm,topsep=0pt]
\item[Case 1] In this case we consider all cases where the fourth row
  in \eqref{sys} is a linear combination of two other rows in
  \eqref{sys}. Suppose first that the fourth row in \eqref{sys} is a
  linear combination of the first two rows. We consider the
  $(3,4)$ minor of $M$:
\beq
\label{rank2}
\left|\begin{matrix}
 {a_1} & {b_1} &-{a_2} {u_1}\\
 {c_1} & {d_1} &-{c_2} {u_2} \\
  {g_1} & {h_1} &-{g_2} \\
\end{matrix}\right|
= a_2(d_1g_1-c_1h_1)u_1+c_2(a_1h_1-b_1g_1)u_2+g_2(b_1c_1-a_1d_1) = 0.
\eeq
The coefficients $a_2(d_1g_1-c_1h_1), c_2(a_1h_1-b_1g_1)$ and
$g_2(b_1c_1-a_1d_1)$ are all non-zero, since $A,C,E$ and $G$ are distinct points and
their images are different from $[0:1]$. Thus equation \eqref{rank2} is a unit equation of the form \eqref{nvars-unit-equation} in the two units
$u_1,u_2\in{\sqrt{R_S^*}}$. Therefore we have at most $B(|S|)$
solutions $(u_1,u_2)\in\left(\sqrt{R_S^*}\right)^2$ satisfying the equation
\eqref{rank2}. 
Let $(u_1,u_2)\in\left(\sqrt{R_S^*}\right)^2$ be one of these solutions. By
considering the first and last equations of the system \eqref{sys}, we
obtain the system
\begin{equation} \label{first-and-last-eq}
\begin{cases}
\displaystyle u_1a_2\frac{y_2}{y_1}-u_1b_2\frac{x_2}{y_1}=a_1-b_1\frac{x_1}{y_1}\\
\displaystyle g_2\frac{y_2}{y_1}-h_2\frac{x_2}{y_1}=g_1-h_1\frac{x_1}{y_1}
\end{cases}
\end{equation}
(recall that $y_1\neq 0$ since
  $P\neq{[1:0]}$). If we solve \eqref{first-and-last-eq} for
  $\frac{y_2}{y_1}$ and $\frac{x_2}{y_1}$ in terms of
  $\frac{x_1}{y_1}$ we obtain $i,j,k,l\in L$ such that 
\[
[x_2:y_2] = \left[i+j \displaystyle\frac{x_1}{y_1}:k+l\displaystyle\frac{x_1}{y_1}\right].
\]
The equality 
\beq\label{pxy1}\phi([x_1:y_1])=[x_2:y_2]=\left[i+j
  \frac{x_1}{y_1}:k+l\frac{x_1}{y_1}\right]\eeq
can be satisfied for at most $d+1$ points $[x_1:y_1]$. 
Therefore there are at most $(d+1)B(|S|)$ pairs of points
$[x_1:y_1],[x_2:y_2]$ satisfying $\phi([x_1:y_1])=[x_2:y_2]$ and
\eqref{sys}, and such that the fourth row in \eqref{sys} is a
linear combination of the first two rows. Note that here we have used
the condition $d\geq 2$. Indeed, since $d\geq 2$, the identity in  \eqref{pxy1} cannot be
satisfied for infinitely many points $[x_1:y_1]$. 
\pdfcomment{Summary: In this case we proved some things in
  general. x2/y2 is determined by u1 and x1/y1. In turns this means
  there are at most (d+1) solutions for x1/y1 for each value of
  u1. This means that solving for u1 solves the finiteness problem.}

By considering the three
possible subcases in \vvs Case 1\vvd, we see that there are at most 
$3(d+1)B(|S|)$
pairs of points $[x_1:y_1],[x_2:y_2]$
satisfying $\phi([x_1:y_1])=[x_2:y_2]$ and \eqref{sys}, and such that
the fourth row in \eqref{sys} is a linear combination of any other two rows in \eqref{sys}. 
\pdfcomment{Should we show the two other cases? Maybe comment that
  they are basically the same}

\item[Case 2] In this case we assume that the fourth row in
  \eqref{sys} is not a linear combination of any other two rows in \eqref{sys}. 
This means that the rank of the matrix $M$ must be exactly $3$, so
that there is at most one solution $(x_1/y_1,x_2/y_1,y_2/y_1)$ 
determined by each set of values of $(u_1,u_2,u_3)$ (recall that $y_1\neq 0$ since
  $P\neq{[1:0]}$). Moreover, if such a solution exists it is in fact determined by
  any pair $(u_i,u_j)$ for distinct $i,j$ in $\{1,2,3\}$, since
  we assumed that the fourth equation of $\eqref{sys}$ is independent of
  any two other equations.

The determinant of the matrix $M$ is zero, as the space of solutions
of the system in \eqref{sys} has positive dimension by the assumption
of the existence of a point $P$ in $\mathcal{P}$. Thus we have
\begin{multline}\label{eq6a}
(b_2g_2-a_2h_2)(d_1e_1-c_1f_1)u_1+(a_1f_1-b_1e_1)(d_2g_2-c_2h_2)u_2+\\
+(b_1c_1-a_1d_1)(f_2g_2-e_2h_2)u_3+(b_2c_2-a_2d_2)(f_1g_1-e_1h_1)u_1u_2+\\
+(a_2f_2-b_2e_2)(d_1g_1-c_1h_1)u_1u_3+(d_2e_2-c_2f_2)(b_1g_1-a_1h_1)u_2u_3=0.
\end{multline}
By the assumptions on the points $A,C,E,G$, each of the coefficients
of the units $u_1,u_2,u_3,u_1u_2,u_1u_3,u_2u_3$ in \eqref{eq6a} is
nonzero.
 
\item[Subcase where in \eqref{eq6a} there are no vanishing subsums] 
The triple \newline $(u_1,u_2,u_3)$ assumes at
  most $C(5,|S|)$ values in $\left(\sqrt{R_S^*}\right)^3$, since we can divide \eqref{eq6a} by $u_1$ to get a unit equation in five variables. Therefore the
  system \eqref{sys} provides at most $C(5,|S|)$ solutions for
  $(x_1/y_1,x_2/y_1,y_2/y_1)$. Thus there are at most $C(5,|S|)$ pairs of 
  points $[x_1:y_1], [x_2:y_2]$ defined over $K$ with $\phi([x_1:y_1])=[x_2:y_2]$ satisfying
  this subcase of \vvs Case 2\vvd. 

Now we take into consideration the subcases where in \eqref{eq6a}
there are vanishing subsums. These can be sorted into three types:
the subcase ``3+3'' where the sum in \eqref{eq6a} can be divided
into two vanishing subsums each one with three addends; the subcase
``2+2+2'' where the sum in \eqref{eq6a} can be divided into three
vanishing subsums; finally, the subcase ``4+2'' containing all
other possible vanishing subsums.

To ease notation we denote 
\begin{align*}
\alpha=(b_2g_2-a_2h_2),&&\beta=(d_1e_1-c_1f_1),&&\gamma=(a_1f_1-b_1e_1),&&\delta=(d_2g_2-c_2h_2),\\
\eta=(b_1c_1-a_1d_1),&&\nu=(f_2g_2-e_2h_2),&&\mu=(b_2c_2-a_2d_2),&&\lambda=(f_1g_1-e_1h_1),\\
\epsilon=(a_2f_2-b_2e_2),&&\rho=(d_1g_1-c_1h_1),&&\omega=(d_2e_2-c_2f_2),&&\zeta=(b_1g_1-a_1h_1),
\end{align*}
so that \eqref{eq6a} assumes the form
\begin{equation} \label{eq6a-greek}
\alpha\beta u_1+\gamma\delta u_2+ \eta\nu u_3+\mu\lambda u_1u_2 +\epsilon\rho u_1u_3+\omega\zeta u_2u_3=0.
\end{equation}

\item[Subcase \vvs 3+3\vvd] There are exactly 10 possible pairs of
  vanishing subsums in this subcase. Let us first assume that the following two subsums vanish:
\beq\label{3+3a}\begin{cases}
\alpha\beta u_1+\gamma\delta u_2+\eta\nu u_3=0\\
\mu\lambda u_1u_2+\epsilon\rho u_1u_3+ \omega\zeta u_2u_3=0.
\end{cases}\eeq
The above system is equivalent to:
\beq\label{sys3+3}\begin{cases}\displaystyle
-\frac{\alpha\beta}{\eta\nu} \frac{u_1}{u_3}-\frac{\gamma\delta}{\eta\nu}\frac{u_2}{u_3}=1\\\displaystyle
-\frac{\epsilon\rho}{\mu\lambda}\frac{u_3}{u_2}-  \frac{\omega\zeta}{\mu\lambda}\frac{u_3}{u_1}=1.
\end{cases}\eeq
There are at most $2$ pairs
$(u_1/u_3,u_2/u_3)$ in $\left(\sqrt{R_S^*}\right)^2$ satisfying
\eqref{sys3+3}. 
Indeed, if we set $x=u_1/u_3$ and $y=u_2/u_3$, and denote the four coefficients in the system by $a,b,c$ and $d$, then \eqref{sys3+3} assumes the following form.
\beq\label{sys3+3a}\begin{cases}\displaystyle
ax+by=1 \\
dx+cy=xy.
\end{cases}\eeq
This system has at most two possible solutions (intersection of a line with a non-degenerate conic). 
Fix one of these two solutions $(m_1,m_2)\in
\left(\sqrt{R_S^*}\right)^2$. Therefore we have $u_1=m_1u_3$ and
$u_2=m_2u_3$. By considering the first, second and fourth equation in the system \eqref{sys} we obtain  
\beq\label{sysu3}
\begin{cases}
a_1y_1-b_1x_1-m_1u_3a_2y_2+m_1u_3b_2x_2=0\\
c_1y_1-d_1x_1-m_2u_3c_2y_2+ m_2u_3d_2x_2=0\\
g_1y_1-h_1x_1-g_2y_2+h_2x_2=0
\end{cases}
\eeq
where the variables are $x_1,y_1,x_2,y_2$ and $u_3$ is considered a
free parameter in $\sqrt{R_S^*}$. 
Each nontrivial solution of the system
\eqref{sysu3} must satisfy equalities of the following form:
\begin{equation}\label{solu3}
\frac{x_1}{y_1} = q_1(u_3),\quad \frac{x_2}{y_1} = q_2(u_3),\quad \frac{y_2}{y_1} = q_3(u_3),
\end{equation}
where $q_1,q_2,q_3$ are quadratic rational functions with coefficients
in $L$. This can be seen by applying Cramer's rule to $\eqref{sysu3}$,
solving for \newline
$(x_1/y_1,x_2/y_1,y_2/y_1)$. Since the denominators of
$q_1,q_2$ and $q_3$ are all equal to the determinant of
$\eqref{sysu3}$ with respect to $(x_1/y_1,x_2/y_1,y_2/y_1)$, we get
that $x_1/y_1$ and $x_2/y_2$ are also at most quadratic rational
functions in $u_3$ with coefficients in $L$ (if $y_2$ is zero so we have $[x_2:y_2]=[1:0]$; recall also that we set $[x_1:y_1]\neq [1:0]$). Solving the equation $\phi([x_1:y_1])=[x_2:y_2]$ with respect to $u_3$ leads to a polynomial equation of degree $\leq 2d+2$ in $u_3$. Thus, there are at most $2d+2$ possible values for $u_3$ (as in \vvs Case 1\vvd, we have used the fact that $\deg(\phi)=d\geq 2$) for each solution $(m_1,m_2)$ of \eqref{sys3+3}. Each value of $u_3$ determines a unique pair $(u_1,u_2)$ from $(m_1,m_2)$ so there are at most $4d+4$ triples $(u_1,u_2,u_3)$ that satisfy system \eqref{3+3a}.
Thus there at most $4d+4$ pairs $[x_1:y_1],[x_2:y_2]\in\p_1(K)$ of points with $\phi([x_1:y_1])=[x_2:y_2]$, such that $x_1,x_2,y_1,y_2$ satisfy \eqref{sys} and the associated units $u_1,u_2,u_3$ satisfy the vanishing subsums in \eqref{3+3a}.

Let us now assume that the following two subsums vanish:
\beq\label{3+3b}\begin{cases}
\alpha\beta u_1+\gamma\delta u_2+\mu\lambda u_1u_2=0\\
\eta\nu u_3+\epsilon\rho u_1u_3+ \omega\zeta u_2u_3=0.
\end{cases}\eeq
After cancelling out $u_3$ form the second equation, we see that there are 
at most $2$ solutions
$(u_1,u_2)\in\left(\sqrt{R_S^*}\right)^2$ for
\eqref{3+3b} (once again, we have an intersection of a line with a non-degenerate conic). Each pair $(u_1,u_2)$ determines uniquely a triple
$(x_1/y_1, y_2/y_1, x_2/y_1),$ since the first,
  second and fourth equations of system \eqref{sys} are of rank
  $3$. This triple uniquely determines the ordered pair $([x_1:y_1],
  [x_2:y_2])$ so that there are at most $2$ pairs of points $[x_1:y_1],[x_2:y_2]$ with  $\phi([x_1:y_1])=[x_2:y_2]$ satisfying \eqref{3+3b}. 

We obtain the same bound for the vanishing subsums
$$\begin{cases}
\alpha\beta u_1+\eta\nu u_3+\epsilon\rho u_1u_3=0\\
\gamma\delta u_2+\mu\lambda u_1u_2+ \omega\zeta u_2u_3=0,
\end{cases}
$$

$$\begin{cases}
\gamma\delta u_2+\eta\nu u_3+ \omega\zeta u_2u_3=0\\
\alpha\beta u_1+\mu\lambda u_1u_2+\epsilon\rho u_1u_3=0.\end{cases}
$$
We assume now that the following two subsums vanish:
\beq\label{3+3c}\begin{cases}
\alpha\beta u_1+\gamma\delta u_2+ \omega\zeta u_2u_3=0\\
\eta\nu u_3+\mu\lambda u_1u_2+\epsilon\rho u_1u_3=0.
\end{cases}\eeq

There are at most $B(|S|)$ solutions
$(u_1/u_2,u_3)\in \left(\sqrt{R_S^*}\right)^2$ for the first equation in
\eqref{3+3c}. For each such solution $(m_1,m_2)$, the second equation of \eqref{3+3c} becomes 
$$\eta\nu m_2+\mu\lambda m_1u_2^2+\epsilon\rho m_1m_2 u_2=0.$$
Thus there are at most two possible values of $u_2$ for each solution
$(m_1,m_2)$. Therefore there are at most $2B(|S|)$ triples
$(u_1,u_2,u_3)\in \left(\sqrt{R_S^*}\right)^3$ satisfying \eqref{3+3c}. These
triples determine, at most $2B(|S|)$ pairs of points $[x_1:y_1],[x_2:y_2]\in \p_1(K)$ with $\phi([x_1:y_1])=[x_2:y_2]$ satisfying \eqref{sys}.
Note that there are 6 subcases as the one given by the system
\eqref{3+3c}; namely where one equation contains monomials in $u_i$
and $u_j$, and the other equation contains a monomial in $u_iu_k$ with
$\{i,j,k\}=\{1,2,3\}$.

Therefore for the subcase \vvs 3+3\vvd \ there are in total at most 
$$(4d+4)+3\cdot{2}+12B(|S|) = 4d + 12B(|S|) + 10$$
pairs of points $[x_1:y_1],[x_2:y_2]\in\p_1(K)$ with $\phi([x_1:y_1])=[x_2:y_2]$.

\item[Subcase \vvs 4+2\vvd] 
There are exactly 15 subcases of the form \vvs 4+2\vvd. 

We first consider the case where the vanishing subsums are:
\beq\label{4+2a}\begin{cases}
\alpha\beta u_1+\gamma\delta u_2+\eta\nu u_3+\mu\lambda u_1u_2=0\\
\epsilon\rho u_1u_3+ \omega\zeta u_2u_3=0.
\end{cases}\eeq
We eliminate $u_1$ to get 
\begin{equation} \label{el4+2a}
\left(-\frac{\alpha\beta\omega\zeta}{\epsilon\rho} +\gamma\delta\right) u_2+\eta\nu u_3-\frac{\mu\lambda\omega\zeta}{\epsilon\rho}u_2^2=0.
\end{equation}
Note that
$-\frac{\alpha\beta\omega\zeta}{\epsilon\rho}+\gamma\delta\neq 0$
as this would imply that the subsum $\eta\nu u_3+\mu\lambda u_1u_2$ vanishes, but we are in the case \vvs 4+2\vvd. \pdfcomment{this is not clear
  to me.}
Therefore there are at most $B(|S|)$ solutions $(u_2,
u_3/u_2)$ for equation \eqref{el4+2a}. Together with the second equation in
\eqref{4+2a}, each solution determines a triple $(u_1,u_2,u_3)$ which
in turn determines uniquely the points $[x_1:y_1]$ and
$[x_2:y_2]$. Thus the vanishing subsums in \eqref{4+2a} provide at
most $B(|S|)$ pairs of points $[x_1:y_1],[x_2:y_2]\in\p_1(K)$
with  $\phi([x_1:y_1])=[x_2:y_2]$ satisfying \eqref{3+3b}. There
exactly 3 pairs of vanishing subsums as in  \eqref{4+2a}; namely where
the ``short'' equation contains a monomial in $u_iu_j$ and a monomial
in $u_ju_k$ with $\{i,j,k\}=\{1,2,3\}$.

Let us assume now that the following two subsums vanish
\beq\label{4+2b}\begin{cases}
\alpha\beta u_1+\gamma\delta u_2+\epsilon\rho u_1u_3+ \omega\zeta u_2u_3=0\\
\eta\nu u_3+\mu\lambda u_1u_2=0.
\end{cases}\eeq
We use the second equation to eliminate $u_3$ and get
$$\alpha\beta u_1+\gamma\delta u_2-\frac{ \epsilon\rho\mu\lambda}{\eta\nu} u_1^2u_2-\frac{\omega\zeta\mu\lambda}{\eta\nu}   u_1u_2^2=0$$
This equation admits at most $C(3,|S|)$ solutions
$(\frac{u_2}{u_1}, u_1u_2,u_2^2)\in\left(\sqrt{R_S^*}\right)^3$. Each such
triple determines two possible values for $u_2$, so there are at most
$2C(3,|S|)$ solutions $(u_1,u_2)\in\left(\sqrt{R_S^*}\right)^2$. Thus for the
subcase of vanishing subsums as in \eqref{4+2b} we have at most
$2C(3,|S|)$ pairs of points $[x_1:y_1],[x_2:y_2]\in\p_1(K)$
with  $\phi([x_1:y_1])=[x_2:y_2]$. Note that there are exactly three
vanishing subsums of the same form as \eqref{4+2b}, namely where the ``short'' vanishing sum contains a monomials in $u_i$ and $u_ju_k$ with $\{i,j,k\}=\{1,2,3\}$. 

We assume now that the two vanishing subsums are the following ones:
\beq\label{4+2c}\begin{cases}
\alpha\beta u_1+\gamma\delta u_2+\mu\lambda u_1u_2+ \omega\zeta u_2u_3=0\\
\eta\nu u_3+\epsilon\rho u_1u_3=0.
\end{cases}\eeq
This system is equivalent to
\begin{equation} \label{4+2c:simplified}
\begin{cases}\displaystyle
 -\frac{\alpha\beta\eta\nu}{\epsilon\rho}+\left(\gamma\delta -\frac{\mu\lambda\eta\nu}{\epsilon\rho}\right)u_2+ \omega\zeta u_2u_3=0\\\displaystyle
 u_1=-\frac{\eta\nu}{\epsilon\rho}.
\end{cases}
\end{equation}
There are at most $B(|S|)$ solutions
$(u_2,u_2u_3)\in\left(\sqrt{R_S^*}\right)^2$ for the first equation in
\eqref{4+2c:simplified}. Therefore in the case of vanishing subsums as
in \eqref{4+2c} we have at most $B(|S|)$ pairs of points
$[x_1:y_1],[x_2:y_2]\in\p_1(K)$ with
$\phi([x_1:y_1])=[x_2:y_2]$. Note that there are exactly 6 vanishing
subsums of the form \eqref{4+2c}, 
namely where the ``short'' vanishing sum contains monomials in $u_i$ and $u_iu_k$ with $i,k\in\{1,2,3\}, i\neq k$. 

Finally, we assume now that the two vanishing subsums are the following ones:
\beq\label{4+2d}\begin{cases}
\alpha\beta u_1+\gamma\delta u_2=0\\
\eta\nu u_3+\mu\lambda u_1u_2+\epsilon\rho u_1u_3+ \omega\zeta u_2u_3=0.
\end{cases}\eeq
We use the first equation to eliminate $u_1$ and get
$$ \eta\nu
u_3-\frac{\mu\lambda\gamma\delta}{\alpha\beta}u_2^2+\left(-\frac{\epsilon\rho\gamma\delta}{\alpha\beta}+
\omega\zeta\right) u_2u_3=0,$$
There are at most $B(|S|)$ solutions
$\left(\frac{u_2^2}{u_3},u_2\right)\in\left(\sqrt{R_S^*}\right)^2$ for this
equation. Therefore in the case of vanishing subsums of the form \eqref{4+2d} we have at most $B(|S|)$ pairs of points $[x_1:y_1],[x_2:y_2]\in\p_1(K)$ with  $\phi([x_1:y_1])=[x_2:y_2]$. Note that there are exactly 3 vanishing subsums of the form\eqref{4+2c}, namely where the ``shortest'' vanishing sum contains monomials in $u_i$ and $u_j$ with $i,j\in\{1,2,3\}, i\neq j$. 

Therefore, in the whole case ``4+2'' provides a total of at most 
$$3B(|S|)+6C(3,|S|)+6B(|S|)+3B(|S|)=6C(3,|S|) + 12B(|S|)$$
pairs of projective points $[x_1:y_1],[x_2:y_2]\in\p_1(K)$
with $\phi([x_1:y_1])=[x_2:y_2]$ satisfying \eqref{sys}.

\item[Subcase \vvs 2+2+2\vvd]
We assume that the three vanishing subsums are the following ones:
\beq\label{2+2a}\begin{cases}
\alpha\beta u_1+\gamma\delta u_2=0\\
\eta\nu u_3+\mu\lambda u_1u_2=0\\
\epsilon\rho u_1u_3+ \omega\zeta u_2u_3=0
\end{cases}\eeq
Note that we should have $\det\bpm \alpha\beta &\gamma\delta \\
\epsilon\rho & \omega\zeta\epm =0$ since $u_1,u_2,u_3$ cannot be zero,
so that we can drop the third equation from the system. 
The system \eqref{2+2a} is therefore equivalent to 
$$\begin{cases}\displaystyle
u_2=-\frac{\alpha\beta}{\gamma\delta} u_1\\\displaystyle
\eta\nu u_3=-\frac{\mu\lambda\alpha\beta}{\gamma\delta} u_1^2\\
\end{cases}$$
By using the same argument as the one for the first subcase ``3+3'', i.e., system \eqref{3+3a}, we see
that at most $2d+2$ values are possible for $u_1$. Therefore we
conclude that for the case of vanishing subsums in \eqref{2+2a}
there are at most $2d+2$ pairs of points
$[x_1:y_1],[x_2:y_2]\in\p_1(K)$ with
$\phi([x_1:y_1])=[x_2:y_2]$. Note that there are exactly 3 vanishing
subsums of the form \eqref{2+2a}, namely where one vanishing sum
contains monomials in $u_i$ and $u_k$ and another vanishing subsum
contains monomials in $u_iu_j$ and $u_ku_j$ with $\{i,j,k\}=\{1,2,3\}$.

We assume now that the three vanishing subsums are the following ones:
\beq\label{2+2c}\begin{cases}
\alpha\beta u_1+\gamma\delta u_2=0\\
\eta\nu u_3+\epsilon\rho u_1u_3=0\\
\mu\lambda u_1u_2+ \omega\zeta u_2u_3=0.
\end{cases}\eeq
This system admits at most one solution
$(u_1,u_2,u_3)\in\left(\sqrt{R_S^*}\right)^3$. Thus there is at most one pair
$[x_1:y_1],[x_2:y_2]\in\p_1(K)$ of points satisfying
$\phi([x_1:y_1])=[x_2:y_2]$ and systems \eqref{sys} and
\eqref{2+2c}. Note that there are exactly 6 vanishing subsums of the
shape as in \eqref{2+2c}, namely where one subsum contains monomials
in $u_i$ and $u_j$ and another subsum contains monomials in $u_k$ and $u_iu_k$ with $\{i,j,k\}=\{1,2,3\}$. 

We assume now that the three vanishing subsums are the following ones:
\beq\label{2+2b}\begin{cases}
\alpha\beta u_1+\epsilon\rho u_1u_3=0\\
\gamma\delta u_2+ \omega\zeta u_2u_3=0\\
\eta\nu u_3+\mu\lambda u_1u_2=0
\end{cases}\eeq
Assuming there is a solution, we drop the second equation. The system
\eqref{2+2b} is then equivalent to
$$\begin{cases}\displaystyle
 u_3=-\frac{\alpha\beta}{\epsilon\rho}\\\displaystyle
\eta\nu u_3+\mu\lambda u_1u_2=0
\end{cases}
$$
Since $u_3$ is uniquely determined by this system, we can use the
third and fourth rows of system \eqref{sys} to express
$[x_2:y_2]$ in terms of $[x_1:y_1]$ and $u_3$. Then, as in
Case 1, the condition $\phi([x_1:y_1])=([x_2:y_2])$ implies that there at most $d+1$ solutions for
$[x_1:y_1]$. Thus there are at most $d+1$ pairs of
points $[x_1:y_1],[x_2:y_2]\in\p_1(K)$ with
$\phi([x_1:y_1])=([x_2:y_2])$ satisfying \eqref{sys} and
\eqref{2+2b}. Note that there are 3 vanishing subsums of the form
\eqref{2+2b}, namely where one subsum contains a monomial in $u_i$ and
$u_ju_k$, another contains a monomial in $u_j$ and the third contains a
monomial in $u_ju_i$, with $\{i,j,k\}=\{1,2,3\}$. 

We assume now that the three vanishing subsums are the following ones:
\beq\label{2+2d}\begin{cases}
\gamma\delta u_2+\epsilon\rho u_1u_3=0\\
\alpha\beta u_1+ \omega\zeta u_2u_3=0\\
\eta\nu u_3+\mu\lambda u_1u_2=0.
\end{cases}\eeq
This system is equivalent to
$$\begin{cases}\displaystyle
u_2=-\frac{\epsilon\rho}{\gamma\delta}u_1u_3\\\displaystyle
u_3^2=\frac{\alpha\beta\gamma\delta}{\epsilon\rho\omega\zeta}\\
\displaystyle
u_1^2=\frac{\eta\nu\gamma\delta}{\lambda\mu\epsilon\rho}
\end{cases}
$$
Therefore there are at most $4$ solutions $(u_1,u_2,u_3)\in \left(\sqrt{R_S^*}\right)^3$ for the system \eqref{2+2d}. Thus there are at most $4$ pairs of points $[x_1:y_1],[x_2:y_2]\in\p_1(K)$ with $\phi([x_1:y_1])=([x_2:y_2])$ satisfying \eqref{sys} and \eqref{2+2d}. 

It remains to consider only the following two pairs of vanishing subsums
\beq\label{2+2e}\begin{cases}
\alpha\beta u_1+\epsilon\rho u_1u_3=0\\
\gamma\delta u_2+\mu\lambda u_1u_2 =0\\
\eta\nu u_3+\omega\zeta u_2u_3=0\end{cases}\ \ ,\ \ \begin{cases}
\alpha\beta u_1+\mu\lambda u_1u_2=0\\
\gamma\delta u_2+ \omega\zeta u_2u_3=0\\
\eta\nu u_3+\epsilon\rho u_1u_3=0\end{cases},\eeq
both of which have at most one solution $(u_1,u_2,u_3)\in \left(\sqrt{R_S^*}\right)^3$.

Therefore there are at most $2$ pairs of points $[x_1:y_1],[x_2:y_2]\in\p_1(K)$ with $\phi([x_1:y_1])=([x_2:y_2])$ satisfying \eqref{sys} and one of the systems in \eqref{2+2e}. 

Therefore for the subcase ``2+2+2'' there is a total of at most 
$$3(2d+2)+6+3(d+1)+4+2=9d+21$$
pairs of projective points $[x_1:y_1],[x_2:y_2]\in\p_1(K)$ with $\phi([x_1:y_1])=[x_2:y_2]$.
\end{description}

Putting together all possible subcases of \vvs Case 1\vvd\ , \vvs
Case 2\vvd\  and $P=[1:0]$ we find that there are at most 
\begin{multline*}
3(d+1)B(|S|) + C(5,|S|) + (4d + 12B(|S|) + 10) +\\+ 6C(3,|S|) + 12B(|S|) +
9d+21 +1
\end{multline*}
points in $\mathcal{P}$, which is the number in (\ref{bound}).
\end{proof}

\section{Proof of the Three-Point Lemma}
%\subsection{Three-Point Lemma}

In order to prove the Three-Point Lemma, we will use the following well known result.
\begin{lemma}\label{Sint3p}
Let $K$ be a number field and $S$ a finite set of places of $K$ of
cardinality $|S|=s$ containing all the archimedean ones. Let $A_1,A_2,
A_3\in\p_1(K)$ be three distinct points. Then the number of $S$-integral points of $\p_1\setminus\{A_1,A_2, A_3\}$ is finite and is bounded by $3\cdot 7^{4s}$.
\end{lemma}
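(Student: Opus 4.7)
The plan is to reduce the statement to a two-variable $S$-unit equation over $K$ via the classical cross-ratio, and then apply Evertse's bound $b(s-1) = 3\cdot 7^{4s}$ directly. I would first fix projective coordinates $A_i = [\alpha_i : \beta_i]$ over $K$ for $i = 1, 2, 3$ and, for each candidate point $P = [x : y] \in \p_1(K)$, consider the cross-ratio
\[
\lambda(P) = \frac{(\alpha_2 y - \beta_2 x)(\alpha_1 \beta_3 - \alpha_3 \beta_1)}{(\alpha_3 y - \beta_3 x)(\alpha_1 \beta_2 - \alpha_2 \beta_1)} \in K^* \setminus \{1\}.
\]
This coincides with the value at $P$ of the unique Möbius transformation over $K$ sending $(A_1, A_2, A_3)$ to $(1, 0, \infty)$; in particular the map $P \mapsto \lambda(P)$ is injective on $\p_1(K)\setminus\{A_1,A_2,A_3\}$. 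A short algebraic manipulation yields a completely analogous expression for $1 - \lambda(P)$ in which $\alpha_2 y - \beta_2 x$ is replaced by $\alpha_1 y - \beta_1 x$.

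Next I would translate the $S$-integrality hypothesis. For each $\pid \notin S$, the definition of the chordal valuation gives
\[
v_\pid(\alpha_i y - \beta_i x) = \dpid(P, A_i) + \min\{v_\pid(x), v_\pid(y)\} + \min\{v_\pid(\alpha_i), v_\pid(\beta_i)\},
\]
and by hypothesis $\dpid(P, A_i) = 0$ for $i=1,2,3$. In the ratio defining $\lambda(P)$ the $P$-dependent term $\min\{v_\pid(x), v_\pid(y)\}$ cancels between numerator and denominator, so that $v_\pid(\lambda(P))$ is a fixed integer depending only on the $A_i$; the same reasoning applies to $v_\pid(1 - \lambda(P))$. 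Hence both $\lambda$ and $1-\lambda$ lie in fixed cosets of $R_S^*$ in $K^*$, and I may write $\lambda = \xi\eta$ and $1-\lambda = \xi'\eta'$, with $\xi, \xi' \in K^*$ depending only on the $A_i$ and $\eta, \eta' \in R_S^*$.

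Substituting into $\lambda + (1 - \lambda) = 1$ produces the two-variable $S$-unit equation $\xi\eta + \xi'\eta' = 1$ in $K$, which by Evertse's bound admits at most $b(s-1) = 3\cdot 7^{4s}$ solutions $(\eta,\eta') \in (R_S^*)^2$. Since each solution determines $\lambda$ and the injectivity of the cross-ratio map recovers $P$ uniquely from $\lambda$, this bounds the cardinality of the set of $S$-integral points by $3\cdot 7^{4s}$. The main technical point requiring care is the valuation-cancellation above, in particular the treatment of degenerate cases such as $P = [1:0]$ or an $A_i$ with a vanishing coordinate; these are absorbed by the projective formalism once global coordinates are fixed at the outset, and reduce to the generic situation.
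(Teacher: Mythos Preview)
Your proposal is correct and follows essentially the same route as the paper: both reduce the $S$-integrality conditions $\dmfp(P,A_i)=0$ to a two-variable $S$-unit equation over $K$ and then invoke Evertse's bound $3\cdot 7^{4s}$. The paper leaves this as a one-line sketch (``by writing the system of linear equations corresponding to the $S$-integral conditions''), while your cross-ratio argument is a clean explicit realisation of that sketch; the only mild imprecision is that the coset representatives $\xi,\xi'$ are well-defined once at least one $S$-integral point exists (since then the fixed fractional $R_S$-ideal determined by the $c_\pid$ is automatically principal), and the bound is trivial otherwise.
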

Lemma \ref{Sint3p} is a direct consequence of Evertse's explicit bound
on the number of solutions for the $S$-unit equation in two variables (see \cite[Theorem
1]{E84}), by writing the system of linear equations corresponding to the $S$-integral conditions $\dmfp(P,A_i)=0$ for all $i\in\{1,2,3\}$ and $\pid\notin S$.

\begin{proof}[Proof of the Three-Point Lemma] 
The inequality \eqref{inq3pl}, together with Lemmas~\ref{lem:rami_p}
and~\ref{lem:tail_dist}, implies that there exist three distinct points $Q_1,Q_2,Q_3\in \phi^{-1}(\phi(\mathcal{A}))$ such that the set $\mathcal{P}_\mathcal{A}$ is contained in the set of $S$-integral points in $\p_1\setminus\{Q_1,Q_2,Q_3\}$.  Now it is enough to apply Lemma \ref{Sint3p}.
\end{proof}

\section{Proofs of the theorems and their corollaries}

%   \begin{corollary}\label{Cor:d2}
% Let $K$ be a number field and $S$ a finite set of places of $K$
%   containing all the archimedean ones with cardinality $s=|S|$. Let
%   $\phi, \psi\colon\Proj_1\to\Proj_1$ be rational maps defined over $K$ of degrees $d_\phi= 2,d_\psi\geq 1$ respectively. Assume that 
%   both maps have good reduction outside $S$. Then 
%   $$|{\rm Per}(\psi\circ\phi,K)|\leq 3\cdot 7^{4s}.$$  
%   \end{corollary}
 \begin{proof}[Proof of Theorem~\ref{MainT} and Proposition~\ref{prop:estimates}]

{We assume that there are at least four distinct periodic
points in Per$(\psi\circ\phi,K)$, as otherwise we are done. We choose
four distinct periodic points and denote them by $A,C,E$ and $G$,
respectively.
Since $A,C,E,G$ are four distinct periodic points, the points \newline
$\phi(A),\phi(C), \phi(E),
\phi(G)$ are also four distinct points. }
%Without loss of generality we may assume that every periodic point of
%$\phi$ in
%$\Per(\phi,K)$ is neither the point $[0:1]$ nor the point $[1:0]$. In
%fact, we can always conjugate $\phi$ by a suitable element
%$\sigma\in\PGL_2(R_S)$ so that the points $[0:1]$ and $[1:0]$ are
%not periodic for $\phi^\sigma$. 
%This conjugation does not change the
%set of primes of bad reduction or the number of $K$-periodic
%points of $\phi$ (cf.~\cite[Proposition~2.11, Theorem~2.15 and Exercise~2.9]{SIL1}).

Let $P\in \p_1(K)$ be a periodic point for $\psi\circ\phi$. By Lemma
\ref{lemmapcomp} the four equalities in \eqref{main-lemma-eq} hold. Therefore 
$${\rm Per}(\psi\circ\phi,K)\subseteq \mathcal{P},$$
where $\mathcal{P}$ is the set defined in the \hyperref[lem:4P]{Four-Point Lemma}. 
Therefore, by the \hyperref[lem:4P]{Four-Point Lemma} we may choose $\kappa=3B(|S|)+13$ and $\lambda=27B(|S|)+C(5,|S|)+6C(3,|S|)+32$.
However, we may then decrease $\lambda$ by $1$,
since we can first move all the periodic points away from $[0:1]$ by
conjugating $\phi$ by an appropriate element in $PGL_2(R_S)$, so
that even if $[0:1]$ is in $\mathcal{P}$ it is not periodic (recall
that in the proof of the \hyperref[lem:4P]{Four-Point Lemma} we always
add $1$ to include the possibility that $[0:1]$ is in $\mathcal{P}$).
%Note that if $N$ is the cardinality of Per$(\phi,K)$, then the number
%of pairs of periodic points $[x_1:y_1],[x_2:y_2]\in\p_1(K)$ with
%$\phi([x_1:y_1])=[x_2:y_2]$ is also $N$ (the function $P\to(P,\phi(P))$ is clearly a bijection). Adding to the total count the bound $d+1$ for the number of fixed points, plus $4$ for the four periodic points $A,C,E$ and $G$, we find that the number of periodic points defined over $K$ is bounded by
%$$(3d+27)B(|S|)+C(5,|S|)+6C(3,|S|)+14d+36$$
%which is linear in $d$. 
\end{proof}

\begin{proof}[Proof of Corollary \ref{CorFG}]
Let $\mathcal{G}=\{\phi_1,\ldots,\phi_n\}$ be a minimal set (with
respect to inclusion) of generators of the semigroup $I$, such that
$\phi_1,\ldots,\phi_n$ have degrees $d_1,\ldots,d_n$ respectively. Let
$S$ be the set of places of $K$ containing all the archimedean ones
and all the places of bad reduction of all the maps in $\mathcal{G}$. 
Therefore each map in $I$ has good reduction outside $S$. 
Furthermore each map in $I$ can be presented as $\psi\circ \phi_i$ for
some $i\in \{1,\ldots,n\}$ and $\psi$ is either the identity map or $\psi\in I$. In both cases we have that $\psi$ has good reduction outside $S$. Let $d=\max_{1\leq i\leq n}\{d_i\}$. We may apply our Theorem~\ref{MainT} to obtain:
$$\left|{\rm Per}(\psi\circ\phi_i,K)\right|\leq \kappa d+\lambda$$
where the numbers $\kappa$ and $\lambda$ are the ones given in the proof of Theorem~\ref{MainT}.
\end{proof}

\begin{proof}[Proof of Corollary \ref{cor:everywhere-good-reduction}]
Here $S$ contains only the archimedean absolute value, and the group
of $S$-units is just $\{\pm{1}\}$. Let $a,b\in\mathbb{C}^*$, then the
unit equation $ax+by=1$ has only one solution with $x,y\in\{\pm{1}\}$,
since only one of the equations $a+b=1, a-b=1, b-a=1, a+b=-1$ can be
satisfied simultaneously. Therefore we can take $B(|S|)$ to be $1$.

In Case 1 in the proof of the \hyperref[lem:4P]{Four-Point Lemma}, 
we only need the case where the fourth row of \eqref{sys}
is a linear combination of the first and second rows. This subcase of
Case 1 has at most $d+1$ solutions. Suppose now that the first, second
and fourth rows are linearly independent. As mentioned in Case 2 of
the proof, any solution of the system is then determined uniquely by the
values of $u_1$ and $u_2$, but these have at most $4$ values. Thus
Case 2 collapses to checking if \eqref{sys} has solutions with these
four values of $(u_1,u_2)$.

Thus in total there are at most $d+5$ periodic points defined over
$\mathbb{Q}$.
%%%(note that by remark \ref{rem:remove-one-from-bound} we do not need to add $1$ to the bound for the point at infinity).
\end{proof}

{\begin{proof}[Proof of Theorem~\ref{thm:main2}]
Lemma~\ref{lemmapcomp} implies that the set of $K$-rational periodic points of $\phi$ is contained in the set $\mathcal{P}_{\mathcal{A}}$ in the Three-Point Lemma. Thus the theorem follows immediately from the Three-Point Lemma.
%By using Corollaries~\ref{cor:rami_cycle} and~\ref{cor:2points}, we can modify the proof of the Three-Point Lemma, and obtain the required result. 
% By Lemma~\ref{div} we have 
% $${\rm Per}(\phi,K)\subseteq \mathcal{P}_\mathcal{A},$$
% where $\mathcal{P_\mathcal{A}}$ is the set defined in the \hyperref[3pointsL]{Three-Point Lemma}.
% Therefore, by the \hyperref[3pointsL]{Three-Point Lemma} the inequality \eqref{eq:3points} holds.
\end{proof}}

\begin{proof}[Proof of Corollary~\ref{cor:everywhere-good-reduction2}]
As remarked after the statement of the Three--Point Lemma, we may assume that $|\mathcal{A}|\leq 3$. 
Over $\mathbb{Q}$, with $S$ containing only the archimedean place, we have $R_S^*=\{\pm 1\}$. Thus, for given $a,b\in\mathbb{Q}$, the $S$-unit equation $ax+by=1$ has at most one solution for $x,y\in R_S^*$. In the proof of Theorem~\ref{thm:main2}, the bound for the cardinality of the set of $K$--rational periodic points for $\phi$ is $B+3$, where $B$ is the bound for the solutions in $R_S^*$ of he $S$-unit equation $ax+by=1$. In the setting of the present corollary, the number $B$ is 1, that proves the corollary.
\end{proof}

\begin{remark} \label{rem:ramified-cycles-part-two}
As pointed out in Remark~\ref{rem:ramified-cycles} in the introduction, the ramified periodic points in Theorem~\ref{thm:main2} and Corollary~\ref{cor:everywhere-good-reduction2} can be replaced by periodic points with ramified points in their orbit. This can be proved by combining Corollaries~\ref{cor:rami_cycle} and~\ref{cor:2points} together with Lemma~\ref{Sint3p}.
\end{remark}

\begin{proof}[Proof of Theorem~\ref{thm:3points}]
By Lemma~\ref{lemmapcomp} we have 
$${\rm Per}(\psi\circ\phi,K)\subseteq \mathcal{P}_\mathcal{A}$$
where $\mathcal{P_\mathcal{A}}$ is the set defined in \hyperref[3pointsL]{Three-Point Lemma}.
Therefore, by the \hyperref[3pointsL]{Three-Point Lemma} the inequality \eqref{eq:3points} holds.
\end{proof}

  \begin{proof}[Proof of Corollary~\ref{Cor:d2}]
If $\psi\circ\phi$ has less than three $K$-rational periodic points we are done. Suppose that $\psi\circ\phi$ has at least three $K$-rational periodic points. We choose as $\mathcal{A}$ a set with three $K$-rational periodic points for $\psi\circ\phi$. By Lemma
\ref{lemmapcomp} we have 
$${\rm Per}(\psi\circ\phi,K)\subseteq \mathcal{P}_\mathcal{A}.$$
Since $\phi$ has degree 2, each point in $A\in \mathcal{A}$ either is a ramification point for $\phi$ or the fiber over $\phi(A)$ admits a $K$--rational preperiodic points that is not periodic, so not in $\mathcal{A}$. Hence we conclude by applying \hyperref[3pointsL]{Three-Point Lemma}.
  \end{proof}

\section{Polynomials with integer coefficients} \label{sec:baron}

In this section, we reproduce the main results from G.\ Baron's unpublished article, providing alternative shorter proofs (we emphasize that the lemmas below already appeared in Baron's original manuscript).

\begin{lemma}\label{lemB1}
Let $f\in\ZZ[x]$ be a monic polynomial of degree $\geq 2$. Suppose that $(a,b)$ is a $2$-cycle in $\QQ$ and $e\in \QQ$ is fixed point. Then $2e=a+b$. 
\end{lemma}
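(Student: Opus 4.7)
The plan is to first establish that all the rational periodic points in question are in fact integers, and then exploit the standard divisibility property of integer-coefficient polynomials: for any $m,n \in \ZZ$, the difference $m-n$ divides $f(m)-f(n)$.

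Step one: note that $a,b,e$ are all roots of monic polynomials with integer coefficients. Indeed, $e$ is a root of the monic $f(x)-x \in \ZZ[x]$, and $a,b$ are roots of the monic $f(f(x))-x \in \ZZ[x]$. Since any rational root of a monic integer polynomial is an integer, we conclude that $a,b,e \in \ZZ$.

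Step two: since $f \in \ZZ[x]$, for every pair of integers $m \neq n$ we have $(m-n) \mid (f(m)-f(n))$. Applying this to the pairs $(a,e)$ and $(b,e)$, and using $f(a)=b$, $f(b)=a$, $f(e)=e$, we obtain
\[
(a-e) \mid (b-e) \quad \text{and} \quad (b-e) \mid (a-e).
\]
Therefore $|a-e| = |b-e|$. The case $a-e = b-e$ is excluded because $a \neq b$ (as $(a,b)$ is a genuine $2$-cycle). Hence $a-e = -(b-e)$, which rearranges to $a+b = 2e$.

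I do not expect any real obstacle here; the only thing to be slightly careful about is to justify integrality of $a,b,e$ before invoking the divisibility property, since the hypothesis only gives rationality. Once integrality is in hand, the divisibility argument is immediate.
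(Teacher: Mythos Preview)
Your proof is correct and follows essentially the same approach as the paper's. The paper phrases the key step via its Lemma~\ref{div} (equality of $\mathfrak{p}$-adic chordal valuations for periodic points under good reduction), which in this setting unwinds precisely to the elementary divisibility $(a-e)\mid(b-e)$ and $(b-e)\mid(a-e)$ that you invoke directly; both arguments then conclude $a-e=\pm(b-e)$ and rule out the $+$ sign.
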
 
\begin{proof}
Since the degree is $\geq 2$, then $a,b,e\in\ZZ$ (all periodic points of $f$ are roots of monic polynomials with integer coefficients, and therefore must be integral over $\ZZ$). The polynomial map $f$ has good reduction at any prime of $\ZZ$, so that it has good reduction outside $S=\{\infty\}$, the set containing only the unique archimedean place of $\QQ$. The only $S$-units are $\pm 1$. By applying Lemma~\ref{div} we have 
$$a-e=u\left(f(a)-f(e)\right)=u(b-e),$$ 
where $u$ is an $S$-unit. Since $a\neq{b}$ we must have $u=-1$, so that $2e=a+b$.
\end{proof}

\begin{lemma}\label{lemB2}
Let $f\in\ZZ[x]$ be a monic polynomial of degree $\geq 2$. Suppose that $(a,b), (c,d)$ are two distinct $2$-cycles in $\QQ$. Then $a+b=c+d$. 
\end{lemma}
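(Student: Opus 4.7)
The plan is to mirror the strategy of Lemma \ref{lemB1}: reduce to $S=\{\infty\}$, use Lemma \ref{div} to turn the $2$-cycle relations into divisibility identities among integer differences, and then do a short sign analysis to force $a+b=c+d$.

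First I would observe that $a,b,c,d\in\ZZ$: they are $\QQ$-rational roots of monic integer polynomials (iterates of $f$), hence algebraic integers in $\QQ$, hence integers. Next, I would verify that $\{a,b\}\cap\{c,d\}=\emptyset$, so the four points are pairwise distinct. Indeed, a shared element would force the two $2$-cycles to coincide: if, say, $a=c$, then $b=f(a)=f(c)=d$, contradicting the distinctness of the cycles; the other three coincidences are handled identically.

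Now the main step. Since $f$ is monic with integer coefficients, it has good reduction outside $S=\{\infty\}$. Applying Lemma \ref{div} to the periodic pair $(a,c)$ yields
\[
v_p(a-c)=v_p(b-d)\qquad \text{for every prime }p,
\]
and since both differences are nonzero integers, we conclude $a-c=\varepsilon_1(b-d)$ for some $\varepsilon_1\in\{\pm 1\}$ (this is exactly the single-variable $S$-unit argument that the only $S$-units are $\pm 1$). Applying Lemma \ref{div} to the pair $(a,d)$ in the same way gives $a-d=\varepsilon_2(b-c)$ with $\varepsilon_2\in\{\pm 1\}$.

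Finally I would do the sign case analysis. If $\varepsilon_1=-1$, then $a-c=d-b$, giving $a+b=c+d$ directly; similarly if $\varepsilon_2=-1$, then $a-d=c-b$ again gives $a+b=c+d$. The remaining case $\varepsilon_1=\varepsilon_2=1$ yields the two identities $a+d=b+c$ and $a+c=b+d$, whose difference forces $c=d$, contradicting the fact that $(c,d)$ is a genuine $2$-cycle. Thus one of the first two alternatives must hold, proving $a+b=c+d$. The only step that even threatens to require care is checking that $a,b,c,d$ are pairwise distinct (so that the invocations of Lemma \ref{div} produce nonzero differences), and this is dispatched in a line as above.
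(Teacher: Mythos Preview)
Your proof is correct and follows essentially the same approach as the paper: reduce to $S=\{\infty\}$, apply Lemma~\ref{div} to the pairs $(a,c)$ and $(a,d)$ to obtain the two sign identities $a-c=\pm(b-d)$ and $a-d=\pm(b-c)$, and then do a short sign analysis. The only minor difference is bookkeeping: the paper assumes $a-c=b-d$ and derives $d\in\{b,c\}$ from the second identity, whereas you symmetrically rule out the case $\varepsilon_1=\varepsilon_2=1$; your explicit check that $a,b,c,d$ are pairwise distinct is a detail the paper leaves implicit.
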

\begin{proof}
As in the proof of the previous lemma, the map $f$ has good reduction outside $S=\{\infty\}$ and $a,b,c,d\in\ZZ$. By applying Lemma~\ref{div} we get 
\beq\label{B2}
a-c= \pm(b-d)\ ,\ a-d=\pm (b-c)
\eeq
Suppose $a-c=b-d$. Then from the second identity in \eqref{B2} we have $d\in\{b,c\}$, contradiction. Therefore we must have $a-c=-(b-d),$ i.e., $a+b=c+d$ as required. 
\end{proof}

\begin{theorem}
Let $f\in\ZZ[x]$ be a monic polynomial of degree $d\geq 2$. Then
$$|{\rm Per}(f,\QQ)|\leq d+1.$$
\end{theorem}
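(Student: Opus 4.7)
The plan is to combine the two preceding lemmas with two ingredients not yet used: the observation that every finite $\QQ$-rational periodic point lies in $\ZZ$ (being integral over $\ZZ$), and a short argument ruling out cycles of length $\geq 3$. The proof then splits naturally according to whether or not a $2$-cycle exists, and in each case all finite periodic points turn out to be roots of a single degree-$d$ polynomial.

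First I would dispose of higher periods. If $(a_1,\ldots,a_n)$ is an $n$-cycle in $\QQ$ with $n\geq 2$, monicity of $f$ gives $a_i\in\ZZ$, and the basic fact $(a_i-a_j)\mid (f(a_i)-f(a_j))$ in $\ZZ$ yields the chain
\[
(a_1-a_2)\mid(a_2-a_3)\mid\cdots\mid(a_n-a_1)\mid(a_1-a_2),
\]
so all consecutive differences $a_{i+1}-a_i$ share a common absolute value $k>0$. Their telescoping sum vanishes, forcing the signs $\varepsilon_i=\mathrm{sgn}(a_{i+1}-a_i)\in\{\pm 1\}$ to satisfy $\sum_i\varepsilon_i=0$; in particular $n$ is even. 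A direct case analysis on the sign pattern shows that for $n\geq 3$ this inevitably produces $a_i=a_j$ for some $i\neq j$, contradicting the minimality of the period. Hence every $\QQ$-rational periodic point of $f$ is either fixed or of exact period $2$.

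Now split into two cases. If $f$ admits no $\QQ$-rational $2$-cycle, then every finite $\QQ$-rational periodic point is a fixed point, hence a root of $f(x)-x$, a polynomial of degree $d$; together with the fixed point at infinity we obtain at most $d+1$ periodic points. If $f$ does admit a $\QQ$-rational $2$-cycle $(a,b)$, set $s=a+b\in\ZZ$. By Lemma~\ref{lemB2}, every other $\QQ$-rational $2$-cycle $(c,d)$ also satisfies $c+d=s$, so both $c$ and $d$ are roots of
\[
g(x)\;:=\;f(x)+x-s.
\]
By Lemma~\ref{lemB1}, any $\QQ$-rational fixed point $e$ must satisfy $2e=s$, so $g(e)=f(e)+e-s=2e-s=0$ as well. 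Thus every finite $\QQ$-rational periodic point of $f$ is a root of the degree-$d$ polynomial $g(x)$, giving at most $d$ such points, and adding the point at infinity yields at most $d+1$ in total.

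The only genuinely non-mechanical step is the exclusion of periods $\geq 3$; everything else is bookkeeping built directly on Lemmas~\ref{lemB1} and~\ref{lemB2}. No serious obstacle is anticipated, and the bound $d+1$ is visibly sharp by the example $f(x)=(x-1)(x-2)\cdots(x-d)+x$ from \eqref{dfixed}, which achieves equality with $d$ finite fixed points together with the fixed point at infinity.
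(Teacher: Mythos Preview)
Your proposal is correct and follows essentially the same route as the paper: split into the cases ``no $2$-cycle'' versus ``some $2$-cycle exists,'' and in the latter case use Lemmas~\ref{lemB1} and~\ref{lemB2} to see that every finite periodic point satisfies $b+f(b)=s$, hence is a root of the single degree-$d$ polynomial $f(x)+x-s$. The only difference is that you supply an explicit elementary argument (via the divisibility chain and the lattice-path/sign-pattern analysis) for the fact that $\QQ$-rational periods are at most $2$, whereas the paper simply invokes this as a known fact (citing \cite[Theorem~12.9]{NA2} in the introduction).
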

\begin{proof}
Suppose that all the periodic points of $f$ in $\p_1(\QQ)$ are fixed. Then the bound $d+1$ holds, as fixed points are roots of $f(x)-x$. Suppose that $f$ has a periodic point $a\in \QQ$ that is not fixed. Then the period of $a$ is two, and we set $h=a+f(a)$. By Lemmas~\ref{lemB1} and~\ref{lemB2} we have $b+f(b)=h$ for any periodic point $b\in\QQ$ (so in $\ZZ$). Therefore any such $b$ is a root of the polynomial equation $f(x)-x-h=0$, which has degree $d$. Thus there are at most $d$ periodic points in $\QQ$. Together with the point at infinity we get $d+1$. 
\end{proof}

\begin{filecontents}{scarcity.bib}

@article {B,
    AUTHOR = {Benedetto, Robert L.},
     TITLE = {Preperiodic points of polynomials over global fields},
   JOURNAL = {J. Reine Angew. Math.},
  FJOURNAL = {Journal f\"ur die Reine und Angewandte Mathematik. [Crelle's
              Journal]},
    VOLUME = {608},
      YEAR = {2007},
     PAGES = {123--153},
      ISSN = {0075-4102},
     CODEN = {JRMAA8},
   MRCLASS = {11G35 (37F10)},
  MRNUMBER = {2339471},
MRREVIEWER = {Clayton Petsche},
       DOI = {10.1515/CRELLE.2007.055},
       URL = {http://dx.doi.org/10.1515/CRELLE.2007.055},
}

@article{E84,
  title={On equations in S-units and the Thue-Mahler equation},
  author={Evertse, J-H},
  journal={Inventiones mathematicae},
  volume={75},
  number={3},
  pages={561--584},
  year={1984},
  publisher={Springer}
} % S-units in 2 variables ax+by=1

@article{E95,
  title={The number of solutions of decomposable form equations},
  author={Evertse, Jan-Hendrik},
  journal={Inventiones mathematicae},
  volume={122},
  number={1},
  pages={559--601},
  year={1995},
  publisher={Springer}
} % S-units in n variables

@book {BG,
    AUTHOR = {Bombieri, Enrico and Gubler, Walter},
     TITLE = {Heights in {D}iophantine geometry},
    SERIES = {New Mathematical Monographs},
    VOLUME = {4},
 PUBLISHER = {Cambridge University Press, Cambridge},
      YEAR = {2006},
     PAGES = {xvi+652},
      ISBN = {978-0-521-84615-8; 0-521-84615-3},
   MRCLASS = {11G50 (11-02 11G10 11G30 11J68 14G40)},
  MRNUMBER = {2216774},
MRREVIEWER = {Yuri Bilu},
       DOI = {10.1017/CBO9780511542879},
       URL = {http://dx.doi.org/10.1017/CBO9780511542879},
}

@article {B.S.1,
    AUTHOR = {Beukers, F. and Schlickewei, H. P.},
     TITLE = {The equation {$x+y=1$} in finitely generated groups},
   JOURNAL = {Acta Arith.},
  FJOURNAL = {Acta Arithmetica},
    VOLUME = {78},
      YEAR = {1996},
    NUMBER = {2},
     PAGES = {189--199},
      ISSN = {0065-1036},
     CODEN = {AARIA9},
   MRCLASS = {11D99 (20F99)},
  MRNUMBER = {1424539 (97k:11051)},
MRREVIEWER = {T. N. Shorey},
}

@article {C,
    AUTHOR = {Canci, Jung Kyu},
     TITLE = {Finite orbits for rational functions},
   JOURNAL = {Indag. Math. (N.S.)},
  FJOURNAL = {Koninklijke Nederlandse Akademie van Wetenschappen.
              Indagationes Mathematicae. New Series},
    VOLUME = {18},
      YEAR = {2007},
    NUMBER = {2},
     PAGES = {203--214},
      ISSN = {0019-3577},
   MRCLASS = {11G99 (14E05)},
  MRNUMBER = {2352676 (2008k:11073)},
MRREVIEWER = {W. Narkiewicz},
       DOI = {10.1016/S0019-3577(07)80017-6},
       URL = {http://dx.doi.org/10.1016/S0019-3577(07)80017-6},
}

@article {C0,
    AUTHOR = {Canci, Jung Kyu},
     TITLE = {Cycles for rational maps of good reduction outside a
              prescribed set},
   JOURNAL = {Monatsh. Math.},
  FJOURNAL = {Monatshefte f\"ur Mathematik},
    VOLUME = {149},
      YEAR = {2006},
    NUMBER = {4},
     PAGES = {265--287},
      ISSN = {0026-9255},
     CODEN = {MNMTA2},
   MRCLASS = {11G99 (37F10)},
  MRNUMBER = {2284648},
MRREVIEWER = {Joseph H. Silverman},
       DOI = {10.1007/s00605-006-0387-7},
       URL = {http://dx.doi.org/10.1007/s00605-006-0387-7},
}

@article{CP,
  title={Preperiodic points for rational functions defined over a global field in terms of good reductions},
  author={Canci, Jung-Kyu and Paladino, Laura},
  journal={PAMS accepted, arXiv:1403.2293},
  year={2016}
}

@article {E.S.S.1,
    AUTHOR = {Evertse, J.-H. and Schlickewei, H. P. and Schmidt, W. M.},
     TITLE = {Linear equations in variables which lie in a multiplicative
              group},
   JOURNAL = {Ann. of Math. (2)},
  FJOURNAL = {Annals of Mathematics. Second Series},
    VOLUME = {155},
      YEAR = {2002},
    NUMBER = {3},
     PAGES = {807--836},
      ISSN = {0003-486X},
     CODEN = {ANMAAH},
   MRCLASS = {11D04},
  MRNUMBER = {1923966 (2003f:11037)},
MRREVIEWER = {Dimitrios Poulakis},
       DOI = {10.2307/3062133},
       URL = {http://dx.doi.org/10.2307/3062133},
}

@book {KM,
    AUTHOR = {Kargapolov, M. I. and Merzljakov, Ju. I.},
     TITLE = {Fundamentals of the theory of groups},
    SERIES = {Graduate Texts in Mathematics},
    VOLUME = {62},
      NOTE = {Translated from the second Russian edition by Robert G. Burns},
 PUBLISHER = {Springer-Verlag, New York-Berlin},
      YEAR = {1979},
     PAGES = {xvii+203},
      ISBN = {0-387-90396-8},
   MRCLASS = {20-01},
  MRNUMBER = {551207},
}

@book {M.1,
    AUTHOR = {Marcus, Daniel A.},
     TITLE = {Number fields},
      NOTE = {Universitext},
 PUBLISHER = {Springer-Verlag, New York-Heidelberg},
      YEAR = {1977},
     PAGES = {viii+279},
      ISBN = {0-387-90279-1},
   MRCLASS = {12-01},
  MRNUMBER = {0457396 (56 \#15601)},
MRREVIEWER = {Ezra Brown},
}

@article {MS,
    AUTHOR = {Morton, Patrick and Silverman, Joseph H.},
     TITLE = {Periodic points, multiplicities, and dynamical units},
   JOURNAL = {J. Reine Angew. Math.},
  FJOURNAL = {Journal f\"ur die Reine und Angewandte Mathematik},
    VOLUME = {461},
      YEAR = {1995},
     PAGES = {81--122},
      ISSN = {0075-4102},
     CODEN = {JRMAA8},
   MRCLASS = {11G99},
  MRNUMBER = {1324210 (96b:11090)},
MRREVIEWER = {Jeffrey Lin Thunder},
       DOI = {10.1515/crll.1995.461.81},
       URL = {http://dx.doi.org/10.1515/crll.1995.461.81},
}

@article{MS1,
  title={Rational periodic points of rational functions},
  author={Morton, Patrick and Silverman, Joseph H},
  journal={International Mathematics Research Notices},
  volume={1994},
  number={2},
  pages={97--110},
  year={1994},
  publisher={Oxford University Press}
}

@article {NA,
    AUTHOR = {Narkiewicz, {W{\l}adys{\l}aw}},
     TITLE = {Polynomial cycles in algebraic number fields},
   JOURNAL = {Colloq. Math.},
  FJOURNAL = {Colloquium Mathematicum},
    VOLUME = {58},
      YEAR = {1989},
    NUMBER = {1},
     PAGES = {151--155},
      ISSN = {0010-1354},
     CODEN = {CQMAAQ},
   MRCLASS = {11R09 (11C08)},
  MRNUMBER = {1028168},
MRREVIEWER = {T. Mets{\"a}nkyl{\"a}},
}

@book{NA2,
  title={Polynomial mappings},
  author={Narkiewicz, W{\l}adys{\l}aw},
  YEAR={1995},
  publisher={Springer-Verlag Berlin Heidelberg}
}

@article{NO,
    AUTHOR = {Northcott, D. G.},
     TITLE = {Periodic points on an algebraic variety},
   JOURNAL = {Ann. of Math. (2)},
  FJOURNAL = {Annals of Mathematics. Second Series},
    VOLUME = {51},
      YEAR = {1950},
     PAGES = {167--177},
      ISSN = {0003-486X},
   MRCLASS = {14.0X},
  MRNUMBER = {0034607},
MRREVIEWER = {B. Segre}
}

@article{SIEG,
  TITLE={Uber einige Anwendungen diophantischer Approximationen},
  AUTHOR={Siegel, Carl Ludwig},
  YEAR={1929},
  publisher={Akad. de Gruyter in Komm.}
}

@book{SIL1,
    AUTHOR = {Silverman, Joseph H.},
     TITLE = {The arithmetic of dynamical systems},
    SERIES = {Graduate Texts in Mathematics},
    VOLUME = {241},
 PUBLISHER = {Springer},
   ADDRESS = {New York},
      YEAR = {2007},
     PAGES = {x+511},
      ISBN = {978-0-387-69903-5},
   MRCLASS = {11-02 (11-01 11G05 11G07 11G50 37-02 37F10)},
  MRNUMBER = {2316407 (2008c:11002)},
MRREVIEWER = {Thomas Ward},
       DOI = {10.1007/978-0-387-69904-2},
       URL = {http://dx.doi.org.proxy1.athensams.net/10.1007/978-0-387-69904-2},
}

@article {TR1,
AUTHOR = {Troncoso, Sebastian},
TITLE = {Bounds for preperiodic points for maps with good reduction},
JOURNAL = {Preprint},
YEAR = {2016}
}

\end{filecontents}

%\bibliographystyle{amsalpha}
%\bibliography{scarcity}

\providecommand{\bysame}{\leavevmode\hbox to3em{\hrulefill}\thinspace}
\providecommand{\MR}{\relax\ifhmode\unskip\space\fi MR }
% \MRhref is called by the amsart/book/proc definition of \MR.
\providecommand{\MRhref}[2]{%
  \href{http://www.ams.org/mathscinet-getitem?mr=#1}{#2}
}
\providecommand{\href}[2]{#2}

\end{document}